\newtheorem{theorem}{Theorem}
\newtheorem{corollary}[theorem]{Corollary}
\newtheorem{example}[theorem]{Example}
\newtheorem{lemma}[theorem]{Lemma}
\newtheorem{proposition}[theorem]{Proposition}
\newenvironment{proof}[1][Proof]{\textbf{#1.} }{\ \rule{0.5em}{0.5em}}
\renewcommand{\geq}{\geqslant}
\def\leq{\leqslant}
\newcommand{\N}{\mathbb{N}}
\def\1{{\mathbf{1}}}
\def\1{{\mathbf{1}}}
\def\0.5{{\frac{1}{2}}}
\def \N{\mathbb{N}}
\newcommand{\HC}{\mathcal{H}}
\newcommand{\inner}[2]{\langle #1, #2 \rangle}
\def \ko{{\delta^{2 \alpha}_n}}
\begin{document}

\title{\textbf{Parameter estimation for fractional stochastic heat equations : Berry-Ess\'een bounds in CLTs}}
\author{
	Soukaina Douissi \thanks{%
		Ecole Nationale des Sciences Appliquées, BP 575, Avenue Abdelkrim Khattabi, 40000, Guéliz-Marrakech Email:\texttt{%
			s.douissi@uca.ma }} and Fatimah Alshahrani \thanks{Department of Mathematical Sciences, College of Science, Princess Nourah bint Abdulrahman University,
		P.O. Box 84428, Riyadh 11671, Saudi Arabia. Email:\texttt{fmalshahrani@pnu.edu.sa}}
}
\date{\today}
\maketitle

\noindent \textbf{Abstract:} The aim of this work is to estimate the drift coefficient of a fractional
heat equation driven by an additive space-time noise using the Maximum likelihood estimator (MLE). In the first part of the paper, the  first $N$ Fourier modes of the
solution are observed continuously over a finite time interval $[0, T ]$. The
explicit upper bounds for the Wasserstein distance for the central limit
theorem of the MLE  is provided when $N \rightarrow \infty$ and/or $T
\rightarrow \infty$. While in the second part of the paper, the $N$ Fourier modes are observed at uniform time grid : $t_i = i \frac{T}{M}$, $i=0,..,M,$ where $M$ is the number of time grid points.  The consistency and asymptotic normality are studied when $T,M,N \rightarrow + \infty$ in addition to the rate of convergence in law in the CLT. 
\\
\noindent\textbf{Mathematics Subject Classification 2020}: 62F12; 60F05; 60G15; 60H15;  60H07.\\
\noindent\textbf{Keywords}: Fractional stochastic partial
differential equations; Parameter estimation; Rate of normal convergence of the MLE, Malliavin calculus,
Wasserstein distance.
\section{Introduction}

The aim of this study is to solve the drift parameter estimation problem $\theta > 0$ of the following fractional stochastic heat equation :
\begin{equation}\label{SPDE}
\left\{
\begin{array}{ll}
dY_\theta(t,x) = - \theta \left( - \Delta\right)^{\alpha} Y_{\theta}(t,x) dt + \sum \limits_{n \in \mathbb{N}}
\delta^{-\gamma}_n e_n(x) dw_n(t), \text{ \ } 0 \leq t \leq T, \text{ \ } x \in G. &  \\
~~ &  \\
Y_{\theta}(0,x) = Y_0 \text{ \ } x \in G,
\end{array}%
\right.
\end{equation}
where $T>0$, $\theta >0$, $\alpha > 0$, $\gamma \geq 0$, G is a bounded and smooth domain in $\mathbb{R}^{d}$, $d \geq 1$. $\Delta$ is the Laplace operator on G with
zero boundary conditions and $H^{r}(G)$ for $r \in \mathbb{R} $, denotes the corresponding Sobolev spaces. The initial condition of (\ref{SPDE}) is such that $Y_0 \in H^{r}(G)$ for some $r \in \mathbb{R}$.
The process $\left\{ Y_{\theta}(t,x), x \in G, t  \in [0,T]
\right\}  $ is defined on a filtered probability space $ \left(\Omega,
\mathcal{F}, \left\{ \mathcal{F}_t\right\}_{t \geq 0} , \mathbf{P}
\right)$ on which we consider a family $\left\{ w_j, j \geq 1 \right\}$  of independent standard Brownian motions. \\
The set $\left\{ e_n, n \in \N \right\}$ are the eigenfunctions of $\Delta$ that form a complete orthonormal system in $L^2(G)$, the corresponding
eigenvalues $h_n, n \in \N$ can be sorted such that
$0< -h_1 \leq - h_2 \leq \ldots .$ \\ Let us introduce the notation  $\delta_n:= (-h_n)^{1/2}$, $n \in \N$, so, there exists a positive constant $\bar{\sigma}$ such that, see \cite{Sh}
\begin{equation}\label{vk}
\lim \limits_{n \rightarrow +\infty} |\delta_n|^2 n^{-2/d} = \bar{\sigma}.
\end{equation}
Assuming that $2(\gamma-r)> d$, then (see for instance \cite{chow, LR17, LR18}) equation (\ref{SPDE}) has a unique solution $Y_\theta$ weak in the PDE sense and strong in the probability sense.\\
In the following, it is assumed that $r \geq 0$ and $2 \gamma > d$.
Let $y_n$, $n \geq 1$ be the Fourier coefficient of the solution $Y_{\theta}$ of (\ref{SPDE}) with respect to $e_n$, $n \geq 1$, i.e.
$y_n(t) = (Y_{\theta}(t), e_n)_0$, $n \in \N$. Let $H^{N}$ be the finite dimensional subspace of $L^2(G)$ generated by $\left\{ e_n, n =1,...,N \right\}$
and denote $P_N$ the projection operator of $L^{2}(G)$ into $H^N$ and put ${Y_{\theta}}^N = P_N Y_{\theta}$. Each Fourier mode $y_n$, $n \geq 1$ follows the dynamics
of an Ornstein-Uhlenbeck process given by
\begin{equation}\label{OU}
dy_n(t) = - \theta \delta^{2 \alpha}_n y_n(t) dt + \delta^{- \gamma}_n dw_n(t),  \text{ \ }y_n(0) = (Y_0, e_n), \text{ \ } t \geq 0.
\end{equation}
In the following, we will denote by $\mathbf{P}^{\theta}_{T,N}$ the probability measure on $C([0,T];H^N)$ generated by ${Y_{\theta}}^N$. We will also fix in the rest of the paper, a parameter value $\theta_0>0$. Then since the measures $\left\{{\mathbf{P}^{\theta}_{T, N}}, \theta>0  \right\}$ are equivalent, hence applying Girsanov's theorem we obtain the following Likelihood Ratio or Radon-Nikodym derivative:
\begin{align*}
\frac{d{\mathbf{P}^{\theta}_{T,N}}}{{d\mathbf{P}^{\theta_0}_{T,N}}}({Y_{\theta}}^N) = \exp\left(
- (\theta-\theta_0) \sum\limits_{n=1}^{N} \delta^{2 \alpha +  \gamma}_n \int_{0}^{T} y_n(t) dy_n(t) - \frac{(\theta^2 - {\theta^2_0})}{2} \sum\limits_{n=1}^{N} \delta^{4 \alpha + 2 \gamma}_n
\int_{0}^{T} y^2_n(t) dt\right).
\end{align*}
Maximizing the log-likelihood ratio with respect to $\theta$ gives the Maximum Likelihood Estimator (MLE) for $\theta$ 
\begin{equation}\label{MLE}
\hat{\theta}_{T,N} := - \frac{\sum\limits_{n=1}^{N} \delta^{2 \alpha + 2 \gamma}_n \int_{0}^{T} y_n(t) dy_n(t) }{\sum\limits_{n=1}^{N} \delta^{4 \alpha + 2 \gamma}_n
	\int_{0}^{T} y^2_n(t) dt}, \text{ \ } N \in \N, \text{ \ } T >0.
\end{equation}
Moreover, using (\ref{OU}), we get :
$\theta - \hat{\theta}_{T,N} =  \frac{\sum\limits_{n=1}^{N}
	\delta^{2 \alpha +  \gamma}_n \int_{0}^{T} y_n(t) dw_n(t)
}{\sum\limits_{n=1}^{N} \delta^{4 \alpha + 2 \gamma}_n
	\int_{0}^{T} y^2_n(t) dt}, \text{ \ } N \in \N, \text{ \ } T >0.$\\
The consistency
and asymptotic normality of  the MLE $\hat{\theta}_{T,N}$ have been
studied in several papers when $N \rightarrow \infty$ and/or $T
\rightarrow \infty$, see for instance \cite{cialenco,CX} and  \cite{CVK}.
However, to utilize the asymptotic distribution
of an estimator it is essential that the rate of convergence is known. For the best of our knowledge, no result
of the Berry-Ess\'een type is known for the distribution of the MLE
$\hat{\theta}_{T,N}$ of the drift parameter $\theta$ of the SPDE
\eqref{SPDE}.
\\
The aim of this work is to study the rate of convergence for the
central limit theorem of the MLE of
$\theta$ in its continuous and discrete versions. The article is
presented as follows: Section 2 consist of some of the essential tools needed from the analysis on Wiener space and Malliavin calculus.
\\
In Section 3, we provide explicit bounds of the convergence of the MLE to a Gaussian random variable for the Wasserstein distance,
considering three different scenarios: when $N \rightarrow +\infty$ and $T$ is fixed, when $T \rightarrow +\infty$ and $N$
is fixed and when both $N,T \rightarrow + \infty$, see Theorem \ref{main-thm} and Corollary \ref{cor. of main-thm}.
The key of the proof  is the fact that the numerator of $\theta_0- \hat{\theta}_{T,N}$ 
is a second chaos random sequence that depends on both $N$ and $T$.
A rigorous study of the third and fourth cumulant of the numerator along with the application of the Optimal
fourth moment theorem \cite{NP} lead to a CLT with the rates of its convergence in law. It is also worth noting
that the proof of the consistency part relies mainly on the properties of multiple Wiener integrals in conjunction with Borelli-Catelli Lemma.
\\
In Section 4, we study the asymptotic properties of an approximate version of
the MLE $\hat{\theta}_{T,N}$, which we denote by $\tilde{\theta}_{T, N, M}$, defined by
$\widetilde{\theta}_{T,N, M}:=-\frac{\sum_{n=1}^N \delta_n^{2
		\alpha+2 \gamma} \sum_{i=1}^M
	y_n\left(t_{i-1}\right)\left[y_n\left(t_i\right)-y_n\left(t_{i-1}\right)\right]}{\Delta_M\sum_{n=1}^N
	\delta_n^{4 \alpha+2 \gamma} \sum_{i=1}^M
	y_n^2\left(t_{i-1}\right)},$
where we simply discretized the numerator and denominator of $\hat{\theta}_{N,T}$, considering
that the first $N$ Fourier modes are now observed at discrete time
instant $t_i = i \frac{T}{M}$, $i=0,..,M$ where $M$ denotes   the number of time grid points.
We proved that when $\frac{T}{M} N^{2\alpha/d} \rightarrow 0 $ as $T,M,N \rightarrow + \infty$,
the estimator $\tilde{\theta}_{T,N,M}$ is consistent and when $\frac{T^{3/2} N^{\frac{3 \alpha}{d}+ \frac{1}{2}}}{M} \rightarrow 0$
as ${T,M,N \rightarrow +\infty}$, then $\tilde{\theta}_{T,N,M}$ is asymptotically Gaussian. We also derive the rate of the
convergence in law of  $\tilde{\theta}_{T,N,M}$ for the Wasserstein distance, see Theorem \ref{thm-theta-tilde} for more details.

\section{Elements of analysis on Wiener space.}
In this section, we give a concise overview of some elements from Malliavin calculus and elements of Gaussian analysis. For additional details about this topic, the interested reader is referred to the following references  \cite{NP-book}, and \cite{nualart-book}.\\
Consider $W$ a Brownian motion defined on a probability space $(\Omega, \mathcal{F}, \mathbf{P})$. The Wiener integral of a deterministic function $g \in L^{2}([0,T])$, $\int_{0}^{T} g(s)dW(s)$ can also be denoted by $W(g)$. The Hilbert space $\mathcal{H} := L^{2}([0,T])$ is endowed with the inner product $$\mathbf{E}[W(f)W(g)]= \int_{0}^{T} f(s)g(s) ds = \inner f g_{L^{2}([0,T])}  $$ 
In the following, we will consider that $\mathcal{F}$ is generated by $W$ and $L^{2}(\Omega) := L^{2}(\Omega,\mathcal{F}, \mathbf{P}).$\\
For all $p \geq 1$, $\mathcal{H}^{W}_p$ denotes the p-th Wiener chaos of $W$ is the closed linear subspace of $L^{2}(\Omega)$ generated by $\left\{ H_p(W(g)), g \in L^{2}([0,T]), \| g\|_{L^{2}([0,T])} =1 \right\}$, where $H_p$ is the p-th Hermite polynomial defined by : 
\begin{equation*}
H_{p}(x)= (-1)^{p} e^{\frac{x^{2}}{2}} \frac{d^{p}}{dx^{p}} e^{-\frac{x^{2}}{2}}, \text{ \ } p \geq 1,
\end{equation*}
and $H_{0}(x) =1$. \\ 
In the rest of this section, the notations $\mathcal{H}^{\otimes p}$ and $\mathcal{H}^{\odot p}$ for any integer $p \geq 1$, mean the $p$-th tensor product and the $p$-th symmetric tensor product of $\mathcal{H}$ respectively.\\
The linear mapping defined by  $I^{W}_p(g^{\otimes p}) : = p! H_p(W(g))$ is an isometry between $\mathcal{H}^{\odot p} = L^2_s([0,T]^p)$ equipped with the norm $\sqrt{p!} \| . \|_{\mathcal{H}^{\odot p}}$ and the
Wiener chaos of order $p$ under $L^{2}\left( \Omega \right) $'s
norm. For $h \in \mathcal{H}^{\otimes p}$, $I^{W}_p(h)$ is a multiple Wiener-It\^o integral of order $p$ with respect to $W$ and we can write 
$$I^{W}_p(h) = \int_{[0,T]^p} h(x_1,....,x_p) dW(x_1)...dW(x_p).$$
Multiple Wiener integrals have many properties, we recall the main ones that we will need in our analysis. We start with the \textbf{isometry property}, which states that if $h_1 \in \mathcal{H}^{\odot p} $ and $ h_2 \in \mathcal{H}^{\odot q}$, for $p,q \geq 1$. Then the following holds :
\begin{equation}\label{Isometry}
\mathbf{E}\left[ I^{W}_{p}(h_1)I^{W}_{q}(h_2)\right] =
\begin{cases}
& p! \times \big\langle h_1, h_2 \big\rangle_{L^2(\mathbb{R}^{p})} \qquad\text{if } p=q\\
& 0 \qquad\qquad\qquad\quad\quad \text{ if } p \ne q.
\end{cases}
\end{equation}%
The second property we will be using is the so-called \textbf{product formula} : Let $p,q \geq 1$. If $h_1 \in \HC^{\odot p}$ and $h_2 \in \HC^{\odot q}$ then
\begin{align}
\label{eq:product} I_p(h_1) I_q(h_2) = \sum_{l = 0}^{p \wedge q} l! {p \choose l} {q \choose l} I_{p + q -2l}(h_1 \widetilde{\otimes}_l h_2).
\end{align}
where $%
h_1\otimes _{l}h_2$  is the contraction of $h_1$ and $h_2$ of order $l$
which is an element of ${\mathcal{H}}^{\otimes (p+q-2l)}$ defined by
\begin{eqnarray*}
	&&(h_1\otimes _{l}h_2)(r_{1},\ldots ,r_{p-l},s_{1},\ldots ,s_{q-l}) \\
	&&\qquad :=\int_{[0,T]^{p+q-2l}}h_1(r_{1},\ldots ,r_{p-l},v_{1},\ldots
	,v_{l})h_2(s_{1},\ldots ,s_{q-l},v_{1},\ldots ,v_{l})\,dv_{1}\cdots
	dv_{l}.
\end{eqnarray*}
The notation  $h_1\widetilde\otimes_l h_2$ in (\ref{eq:product}) means \textbf{the symmetrization} of $%
h_1\otimes _{l}h_2$  which is defined for a function $h$ by
$$\tilde{h}(y_{1},\ldots,y_{q}) = \frac{1}{q!} \sum\limits_{\sigma} h(y_{\sigma(1)},...,y_{\sigma(q)})$$
where all permutations $\sigma$ of $ \{1,...,q\}$ are included in the sum.
The special case $p=q=1$ in (\ref{eq:product}) is useful in practice can be written as follows:
\begin{equation}\label{product}
I_{1}(h_1)I_{1}(h_2)=2^{-1}I_{2}\left( h_1\otimes h_2+h_2\otimes h_1\right) +\langle
h_1,h_2\rangle _{{\mathcal{H}}}.
\end{equation}
Another important property in Wiener chaos is \textbf{the hypercontractivity} : If $q\geq1$, then for  any $p\geq2$, there exists $C(p,q)$
depending only on $p$ and $q$ such that, for every $X\in \oplus
_{l=1}^{q}{\mathcal{H}}_{l}$,
\begin{equation}
\left( \mathbf{E}\big[|X|^{p}\big]\right) ^{1/p}\leqslant C(p,q) \left( \mathbf{E}\big[|X|^{2}%
\big]\right) ^{1/2}.  \label{hypercontractivity}
\end{equation}%
The constants $C(p,q)$ above are known with some
precision when $F\in {\mathcal{H}}_{q}$; $C(p,q)=\left( p-1 \right) ^{q/2}$, see \cite{NP-book}.
We also recall the \textbf{Optimal fourth moment theorem} which will be used in order to derive rates of convergences in CLTs.\\
Let $N$ denote the standard normal random variable. Consider a random sequence $X_{n}\in {\mathcal{H}}_{q}$, $q \geq 1$ such that $Var\left[ X_{n}\right] =1$ and $\left( X_{n}\right)_{n}$ converges in distribution to $N$. The \emph{Fourth moment theorem} proved in~\cite{NuP} claims that this convergence is equivalent to $\lim_{n}\mathbf{E}\left[ X_{n}^{4}\right] =3$. The following optimal estimate for $d_{TV}\left( X_n,\mathcal{N}\right) $, known as the Optimal fourth moment theorem, was proved in \cite{NP}: with the sequence $X_n$ as above, assuming convergence, there exist two constants $c,C>0$ independent of $n$, such that
\begin{equation}
c\max \left\{ \mathbf{E}\left[ X_{n}^{4}\right] -3,\left\vert \mathbf{E}\left[ X_{n}^{3}\right] \right\vert \right\} \leqslant d_{TV}\left(X_{n},\mathcal{N}\right) \leqslant C\max \left\{ \mathbf{E}\left[X_{n}^{4}\right] -3,\left\vert \mathbf{E}\left[ X_{n}^{3}\right]\right\vert \right\} . \label{optimal berry esseen}
\end{equation}
Notice that optimal bound~\eqref{optimal berry esseen} holds with $d_{TV}$ replaced by $d_W$, see (\cite{DEKN}, Remark 2.2). We recall that, for two random variables $F$ and $G$, the former metrics are respectively given by
\begin{align}
d_{TV}\left( F,G\right) := \sup_{A\in \mathcal{B}({\mathbb{R}})}\left\vert \mathbf{P}\left[ F\in A\right] -\mathbf{P}\left[ G\in A\right] \right\vert, \ \  d_{W}\left( F,G\right) := \sup_{h\in Lip(1)}\left\vert \mathbf{E} [h(F)]-\mathbf{E} [h(G)]\right\vert,
\end{align}
where $Lip(1)$ is the set of all Lipschitz functions with Lipschitz constant $\leqslant 1.$ 
We also recall, the definition of the cumulant of order $p \geq 2$, $k_{p}(F)$ of a random variable $F= I_2(h)$, 
$$k_p(F) = 2^{p-1} (p-1)! \inner{ \ \underbrace{h {\otimes_1} \ldots {\otimes_1}h}_{p-1 \ copies \ of \ h}}{h}_{\HC^{\otimes 2} \ }.$$ 
In particular for $p=3,4$, we obtain estimates of the third and fourth cumulant respectively as follows :
\begin{eqnarray} 
k_{3}(F)  = \mathbf{E}[F^3] =8\langle h,h\otimes_1
h\rangle_{\mathcal{H}^{\otimes 2}}\label{3rd-cumulant}\  \text{and} \ 
k_{4}(F)
&=&  \mathbf{E}[F^4] -3\mathbf{E}[F^2]^2 \\
&=& 16\left(\| h \otimes_1 h\|_{\mathcal{H}^{\otimes
		2}}^2+2\|h\widetilde{\otimes_1} h\|_{\mathcal{H}^{\otimes
		2}}^2\right)\nonumber\\&\leq&48\|h\otimes_1
h\|_{\mathcal{H}^{\otimes 2}}^2.\label{4th-cumulant}
\end{eqnarray}
\section{Berry-Ess\'een bounds for the MLE when $N$ and $T$ are large: Continuous time observations}
For simplicity, we set $Y_0 = 0$ hence $y_n(0)= 0$ for all $n \geq
1$. Because equation \eqref{OU}
is linear, it is immediate to see that its solution can be expressed
explicitly as follows :
$$y_n(t) = \delta_n^{-\gamma} \int_{0}^{t} e^{- \theta_0 \delta_n^{2 \alpha} (t-u)} dw_n(u), \text{ \ } n = 1,...,N.$$
Define:
\begin{equation}\label{ENT}
E_{T,N} := \sum\limits_{n=1}^{N} \delta^{2 \alpha+ \gamma}_n \int_{0}^{T} y_n(t) dw_n(t), \ \ \psi^{\theta}_{T,N} := \frac{T}{2 \theta_0} \sum \limits_{n=1}^{N} \delta^{2 \alpha}_{n} \sim \frac{\bar{\sigma} d T N^{\frac{2 \alpha}{d}+1}}{(4 \alpha + 2d) \theta_0} \text{ \ } \text{as} \text{ \ } N,T \rightarrow +\infty.
\end{equation}
Indeed, by (\ref{vk}), we have $\delta_n \sim \bar{\sigma}^{1/2} n^{1/d}$ as $n \rightarrow +\infty$, thus $\sum\limits_{n=1}^{N} \delta^{2 \alpha}_n \sim \bar{\sigma}^{\alpha} \frac{N^{\frac{2 \alpha}{d}+1}}{\frac{2 \alpha}{d} +1},$ as $N \rightarrow +\infty$. \\
By (\ref{ENT}), we can write
\begin{align}\label{MLE-exp}
\theta_0 - \hat{\theta}_{T,N} & =  \frac{\sum\limits_{n=1}^{N} \delta^{2 \alpha +  \gamma}_n \int_{0}^{T} y_n(t) dw_n(t) }{\sum\limits_{n=1}^{N} \delta^{4 \alpha \ + 2 \gamma}_n
	\int_{0}^{T} y^2_n(t) dt} =\frac{E_{T,N}}{< E_{T,N} >}.
\end{align}
The numerator term of (\ref{MLE-exp}) is a second chaos random variable, indeed for every $ n \geq 1,$
\begin{align*}
\int_{0}^{T} y_{n}(t) dw_n(t) & = \delta^{- \gamma}_n \int_{0}^{T} \int_{0}^{t} e^{- \theta_0 \delta^{2 \alpha}_n(t-u)} dw_n(u) dw_n(t) = I^{w_n}_2(g^{n}),
\end{align*}
where $g^n(s,t) := \delta^{2 \alpha}_{n} e^{- \theta_0 \delta^{2\alpha}_n |t-s| } \mathbf{1}_{[0,T]^2}(t,s), \text{ \ } n=1,...,N.$
Therefore, using the linear property of multiple integrals, we can express $E_{T,N}$ as follows
\begin{equation}\label{ETN}
E_{T,N} = I_2\left(\frac{g_{T,N}}{2}\right) = \frac{1}{2} \sum\limits_{n=1}^{N} I^{w_n}_2(g^{n})
\end{equation}
where $g_{T,N} = (g^1,...,g^N)$. For the denominator term, using the product formula (\ref{product}), we get for every $n=1,...,N$ 
\begin{align*}
y^2_n(t) & = \delta^{-2 \gamma}_{n}  e^{- 2 \theta_0 \delta_n^{2
		\alpha}t}  I^{w_n}_2 \left(e^{\theta_0 \delta^{2 \alpha}_n(u+v)}
\mathbf{1}(u,v)_{[0,t]^2} \right) + \frac{\delta^{-2 \gamma}_n}{2
	\theta_0 \ko} \left(1-e^{-2 \theta_0 \ko t} \right).
\end{align*}
Integrating over $[0,T]$, 
\begin{align*}
\int_{0}^{T} y^2_n(t) dt & = \delta^{-2 \gamma}_{n} I^{w_n}_2\left( e^{\theta_0 \delta^{2 \alpha}_n (u+v)} \int_{u \vee v}^{T} e^{- 2 \theta_0 \delta^{2 \alpha}_n s} ds\right) + \frac{\delta^{-2 \gamma}_{n}}{2 \theta_0 \delta^{2 \alpha}_n} \left(T + \frac{e^{-2 \theta_0 \delta^{2 \alpha}_n T} -1}{2 \theta_0 \delta^{2 \alpha}_n} \right) \\
& =  \delta^{-2 \gamma}_{n} I^{w_n}_2\left( \frac{e^{- \theta_0 \delta^{2 \alpha}_n |u-v|}}{2 \theta_0 \delta^{2 \alpha}_n} \mathbf{1}(u,v)_{[0,T]^2} \right) - \delta^{-2 \gamma}_{n} I^{w_n}_2\left( \frac{e^{-2 \theta_0 \delta^{2 \alpha}_n T} e^{\theta_0 \delta^{2 \alpha}_n (u+v)}}{2 \theta_0 \delta^{2 \alpha}_n} \mathbf{1}(u,v)_{[0,T]^2} \right) \\
& \hspace*{1cm} + \frac{\delta^{-2 \gamma}_{n}}{2 \theta_0 \delta^{2 \alpha}_n} \left( T + \frac{e^{-2 \theta_0 \delta^{2 \alpha}_n T} -1}{2 \theta_0 \delta^{2 \alpha}_n}\right)
\end{align*}
Thus
\begin{align}\label{crochet-ENT}
\langle E_{T,N}\rangle& = \sum \limits_{n=1}^{N} \delta^{4 \alpha +2 \gamma}_n \int_{0}^{T} y^2_n(t) dt   = \sum\limits_{n=1}^{N} \ko I^{w_n}_2\left( \frac{e^{- \theta_0  |u-v|}}{2 \theta_0 } \mathbf{1}(u,v)_{[0,T]^2} \right) - \sum\limits_{n=1}^{N} \ko I^{w_n}_2\left( \frac{e^{-2 \theta_0 \ko T} e^{\theta_0 \ko (u+v)}}{2 \theta_0 } \mathbf{1}(u,v)_{[0,T]^2} \right) \notag \\
& \hspace*{5cm} +  \sum\limits_{n=1}^{N} \frac{\delta^{2 \alpha}_{n}}{2 \theta_0 } \left( T + \frac{e^{-2 \theta_0 \ko T} -1}{2 \theta_0 \ko}\right) \notag \\
&\hspace*{3.5cm}  = I_2(\frac{g_{T,N}}{2 \theta_0}) - I_2(l_{T,N}) + \lambda_{T,N} = \frac{E_{T,N}}{\theta_0} - I_2(l_{T,N}) + \lambda_{T,N}  
\end{align}
with $ l_{T,N} :=(l^1,...,l^N) \text{\ } \text{where} \text{ \ }
l^{n}(u,v) := \frac{\ko}{2 \theta_0} e^{-2 \theta_0 \ko T} e^{\theta_0 \ko (u+v)}
\mathbf{1}(u,v)_{[0,T]^2}$
and
\begin{align*}
\lambda_{T,N} & = \frac{1}{2 \theta_0} \sum \limits_{n=1}^{N} \ko \left( T + \frac{e^{-2 \theta_0 \ko T}-1}{2 \theta_0 \ko }\right) = \frac{T}{2 \theta_0} \sum \limits_{n=1}^{N} \ko  + \frac{1}{4 \theta_0} \sum \limits_{n=1}^{N} (e^{-2 \theta_0 \ko T}-1) \\
& = \psi^{\theta_0}_{T,N} + \frac{1}{4 \theta_0} \sum \limits_{n=1}^{N} (e^{-2 \theta_0 \ko T}-1).
\end{align*}
By (\ref{MLE-exp}), (\ref{ENT}) and (\ref{crochet-ENT}), we can write, for every $N \geq 1$, $T >0$,
\begin{equation}\label{MLE-theta0}
\theta_0 - \hat{\theta}_{T,N} :=
\frac{\frac{1}{2}I_2(g_{T,N})}{\frac{1}{2 \theta_0} I_2(g_{T,N})-
	I_2(l_{T,N}) + \lambda_{T,N}}.
\end{equation}
Note that the strong consistency of $\hat{\theta}_{T,N}$  when $N
\rightarrow \infty$ and $T$ fixed or  $T \rightarrow \infty$ and $N$
fixed, and when both  $N, T \rightarrow \infty$
has been already proved in  \cite{cialenco,CX} and
\cite{CVK}, respectively; here we propose a different approach based
on the Borel-Cantelli's Lemma and the properties of multiple Wiener integrals recalled in the preliminaries.
\begin{theorem}\label{consistency-thm}
	Suppose that $\alpha > 0$, $\theta_0 >0$, $ \gamma \geq 0$, $d \geq 1$ in equation (\ref{SPDE}). Let $\hat{\theta}_{T,N}$ be the MLE given in (\ref{MLE}) and $\psi^{\theta}_{T,N}$ given in (\ref{ENT}). Then, for every $T> 0$ fixed (resp.  $N \geq 1$ fixed), we have 
		$$ \hat{\theta}_{T,N} \longrightarrow \theta_0, \text{ \ } \text{almost surely as} \text{ \ } N \rightarrow +\infty \  (\text{resp.} \ T \rightarrow +\infty).$$
\end{theorem}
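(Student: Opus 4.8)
The plan is to start from the representation \eqref{MLE-theta0} and normalize both numerator and denominator of $\theta_0-\hat{\theta}_{T,N}$ by the deterministic quantity $\psi^{\theta_0}_{T,N}$ of \eqref{ENT}. Writing
$$\theta_0-\hat{\theta}_{T,N}=\frac{E_{T,N}/\psi^{\theta_0}_{T,N}}{\langle E_{T,N}\rangle/\psi^{\theta_0}_{T,N}},$$
it suffices to show that $E_{T,N}/\psi^{\theta_0}_{T,N}\to 0$ and $\langle E_{T,N}\rangle/\psi^{\theta_0}_{T,N}\to 1$ almost surely, in each of the two regimes. Since $\langle E_{T,N}\rangle=\theta_0^{-1}E_{T,N}-I_2(l_{T,N})+\lambda_{T,N}$ by \eqref{crochet-ENT}, and since $\lambda_{T,N}/\psi^{\theta_0}_{T,N}=1+\frac{1}{4\theta_0\psi^{\theta_0}_{T,N}}\sum_{n=1}^N(e^{-2\theta_0\ko T}-1)\to 1$ (the correction sum being $O(N)$ while $\psi^{\theta_0}_{T,N}\sim c\,TN^{2\alpha/d+1}$ by \eqref{ENT} and \eqref{vk}), the problem reduces to proving $E_{T,N}/\psi^{\theta_0}_{T,N}\to 0$ and $I_2(l_{T,N})/\psi^{\theta_0}_{T,N}\to 0$ almost surely.

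The first ingredient is a pair of variance estimates from the isometry \eqref{Isometry} and the independence of the $\{w_n\}$. A direct computation gives
$$\mathrm{Var}(E_{T,N})=\tfrac12\sum_{n=1}^N\|g^n\|_{\mathcal{H}^{\otimes 2}}^2=\psi^{\theta_0}_{T,N}-\frac{1}{4\theta_0^2}\sum_{n=1}^N(1-e^{-2\theta_0\ko T})\le \psi^{\theta_0}_{T,N},$$
while for the remainder kernel one finds $\|l^n\|_{\mathcal{H}^{\otimes 2}}^2=(1-e^{-2\theta_0\ko T})^2/(16\theta_0^4)$, so that $\mathrm{Var}(I_2(l_{T,N}))=2\sum_{n=1}^N\|l^n\|_{\mathcal{H}^{\otimes 2}}^2\le N/(8\theta_0^4)$. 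Dividing by $(\psi^{\theta_0}_{T,N})^2\sim c^2T^2N^{2(2\alpha/d+1)}$ shows that both normalized quantities tend to $0$ in $L^2(\Omega)$ as $N\to\infty$ (with $T$ fixed) or as $T\to\infty$ (with $N$ fixed).

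To upgrade $L^2$-convergence to almost sure convergence I would invoke hypercontractivity \eqref{hypercontractivity}: since $E_{T,N}$ and $I_2(l_{T,N})$ lie in the second chaos, for every $p\ge 2$ one has $\mathbf{E}|E_{T,N}|^p\le (p-1)^p(\mathrm{Var}\,E_{T,N})^{p/2}$, and likewise for $I_2(l_{T,N})$. Combining this with Chebyshev's inequality yields, for every $\varepsilon>0$,
$$\mathbf{P}\!\left(\Big|\tfrac{E_{T,N}}{\psi^{\theta_0}_{T,N}}\Big|>\varepsilon\right)\le \frac{(p-1)^p}{\varepsilon^p}\,(\psi^{\theta_0}_{T,N})^{-p/2}.$$
In the regime $N\to\infty$, $T$ fixed, $\psi^{\theta_0}_{T,N}$ grows like $N^{2\alpha/d+1}$ with exponent strictly larger than $1$, so the right-hand side is summable over $N\in\N$ and the Borel-Cantelli lemma gives the claim; the same reasoning applies to $I_2(l_{T,N})$, and the conclusion $\theta_0-\hat{\theta}_{T,N}\to 0$ a.s. follows.

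The delicate point is the regime $N$ fixed, $T\to\infty$, where $T$ is a continuous parameter and $\psi^{\theta_0}_{T,N}$ grows only linearly, so $\sum_k(\psi^{\theta_0}_{k,N})^{-p/2}<\infty$ only for $p>2$. Here I would first run the Borel-Cantelli argument along the integer times $T=k$, and then interpolate. For the numerator this is where the main effort lies: $T\mapsto E_{T,N}=\sum_{n=1}^N\delta_n^{2\alpha+\gamma}\int_0^T y_n\,dw_n$ is a continuous $L^2$-martingale, so Doob's maximal inequality bounds $\mathbf{E}\big[\sup_{T\in[k,k+1]}|E_{T,N}|^p\big]$ by a constant times $\mathbf{E}|E_{k+1,N}|^p$, which lets me replace the pointwise deviation by its supremum over each unit interval and close the Borel-Cantelli estimate with $p=4$. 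For the denominator the interpolation is easier: $T\mapsto\langle E_{T,N}\rangle$ is nondecreasing, so a monotonicity squeeze between consecutive integers, together with $\psi^{\theta_0}_{k+1,N}/\psi^{\theta_0}_{k,N}\to 1$, transfers the integer-time limit to all real $T$. Combining these gives $\langle E_{T,N}\rangle/\psi^{\theta_0}_{T,N}\to 1$ and $E_{T,N}/\psi^{\theta_0}_{T,N}\to 0$ a.s., whence $\theta_0-\hat{\theta}_{T,N}\to 0$ a.s. in both regimes.
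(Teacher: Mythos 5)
Your proof is correct, but for the regime $N$ fixed, $T\to\infty$ it takes a genuinely different route from the paper. For $T$ fixed and $N\to\infty$ your argument coincides with the paper's: the same decomposition of $\theta_0-\hat\theta_{T,N}$ via \eqref{MLE-theta0}, the same second-moment computations for $E_{T,N}$ and $I_2(l_{T,N})$, Chebyshev/Markov plus Borel--Cantelli (with $p=2$ already sufficient since $\psi^{\theta_0}_{T,N}\sim cN^{2\alpha/d+1}$), and the deterministic limit $\lambda_{T,N}/\psi^{\theta_0}_{T,N}\to1$. For $T\to\infty$, however, the paper abandons the chaos decomposition entirely: it applies It\^o's formula to $y_n^2(T)$, compares $y_n$ with the stationary process $z_n(t)=\int_{-\infty}^te^{-\theta_0\delta_n^{2\alpha}(t-u)}dw_n(u)$, invokes the ergodic theorem to get $\frac1T\int_0^Tz_n^2\,dt\to\frac1{2\theta_0\delta_n^{2\alpha}}$ a.s., and borrows Lemma 6.7 of \cite{HNZ} for $z_n(T)/T^\alpha\to0$ a.s. You instead stay inside the Borel--Cantelli framework: fourth moments via hypercontractivity along integer times (where $p=2$ fails because $\psi^{\theta_0}_{T,N}$ grows only linearly, and you correctly identify $p=4$ as sufficient), Doob's maximal inequality to control the continuous-time martingale $T\mapsto E_{T,N}$ between consecutive integers, and a monotonicity squeeze for the nondecreasing denominator $\langle E_{T,N}\rangle$. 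Both arguments are sound; yours is self-contained and treats the two regimes uniformly, while the paper's is shorter at the cost of importing the ergodic theorem and an external almost-sure growth lemma. One cosmetic remark: your identity $\mathrm{Var}(E_{T,N})=\psi^{\theta_0}_{T,N}-\frac{1}{4\theta_0^2}\sum_{n=1}^N(1-e^{-2\theta_0\delta_n^{2\alpha}T})$ differs in the constant of the correction term from the paper's (itself internally inconsistent between $\frac{1}{2\theta_0}$ and $\frac{1}{4\theta_0}$), but this is immaterial since only the bound $\mathrm{Var}(E_{T,N})\leq\psi^{\theta_0}_{T,N}$ and the $O(N)$ size of the correction are used.
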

\begin{proof}
Assume that $T> 0$ is fixed, we can write from (\ref{MLE-theta0}),
	\begin{equation}
	\theta_0 - \hat{\theta}_{T,N} := \frac{\frac{1}{2}I_2(g_{T,N})/
		\psi^{\theta_0}_{T,N}}{ \left(\frac{1}{2 \theta_0} I_2(g_{T,N})- I_2(l_{T,N}) +
		\lambda_{T,N} \right)/ \psi^{\theta_0}_{T,N}}.
	\end{equation}
	We have
	\begin{align}\label{numerator}
	\mathbf{E}\left[\left(\frac{1}{2}I_2(g_{T,N}) \right)^2\right] & = \frac{1}{2} \|g_{T,N} \|^2 = \frac{1}{2} \sum\limits_{n=1}^{N} \delta^{4 \alpha}_n \int_{0}^{T} \int_{0}^{T} e^{-2 \theta_0 \ko |u-v|} du dv  = \sum\limits_{n=1}^{N} \delta^{4 \alpha}_n \int_{0}^{T} \int_{0}^{u} e^{-2 \theta_0 \ko (u-v)} du dv \notag \\
	&\hspace{2cm} = \sum\limits_{n=1}^{N} \delta^{4 \alpha}_n   \int_{0}^{T} \int_{0}^{u} e^{- 2 \theta_0 \ko w} dw du  = \sum\limits_{n=1}^{N} \delta^{4 \alpha}_n \int_{0}^{T} \left( \frac{1 - e^{- 2 \theta_0 \ko u}}{2 \theta_0 \ko} du\right)\notag  \\
	&\hspace{2cm}  = \psi^{\theta_0}_{T,N} + \frac{1}{2 \theta_0 } \sum\limits_{n=1}^{N}  (e^{- 2 \theta_0 \ko T}-1)  = \lambda_{T,N}.
	\end{align}Thus, there exists a constant $C(\alpha, d, \theta_0 , T)$ such that
	\begin{align*}
	\mathbf{E}\left[\left(\frac{\frac{1}{2}I_2(g_{T,N})}{\psi^{\theta_0}_{T,N}}  \right)^2\right] & = \frac{1}{\psi^{\theta_0}_{T,N}} + \frac{1}{(\psi^{\theta_0}_{T,N})^2} \frac{1}{2 \theta_0} \sum \limits_{n=1}^{N}  (e^{- 2 \theta_0 \ko T}-1)  \leq \frac{1}{\psi^{\theta_0}_{T,N}} + \frac{N}{2 \theta_0  (\psi^{\theta_0}_{T,N})^2} \leq \frac{C(\alpha, \theta_0, d, T)}{N^{\frac{2 \alpha}{d} +1}}.
	\end{align*}
	It follows that for every $\varepsilon > 0$, we have by Markov's inequality
	\begin{align*}
	\sum\limits_{N=1}^{+ \infty} \mathbf{P}\left(\left|\frac{\frac{1}{2}I_2(g_{T,N})}{\psi^{\theta_0}_{T,N}} \right|> \varepsilon \right) & \leq \frac{1}{\varepsilon^2} \sum\limits_{N=1}^{+ \infty} \mathbf{E}\left[\left(\frac{\frac{1}{2}I_2(g_{T,N})}{\psi^{\theta_0}_{T,N}} \right)^2 \right]  \leq \frac{C(\alpha, \theta_0, d, T)}{\varepsilon^2} \sum\limits_{N=1}^{+ \infty} \frac{1}{N^{\frac{2 \alpha}{d}+1}} < + \infty.
	\end{align*}
	Hence, from Borel-Cantelli's Lemma, for every $T > 0$, we have $\frac{\frac{1}{2}I_2(g_{T,N})}{\psi^{\theta_0}_{N,T}} \longrightarrow 0 \text{ \ }  \text{a.s} \text{ \ } \text{as} \text{ \ }  N \rightarrow +\infty.$
On the other hand,
	\begin{align*}
	\|l_{T,N} \|^2 &= \frac{1}{4 \theta_0^2} \sum \limits_{n=1}^{N} \delta^{4 \alpha}_n \int_{0}^{T} \int_{0}^{T} e^{2 \theta_0 \ko (u+v)} du dv  = \frac{1}{16 \theta^4_0} \sum\limits_{n=1}^{N} (1- e^{-2 \theta_0 \ko T})^2  \leq \frac{N}{16 \theta^4_0}.
	\end{align*}
	We obtain therefore
	\begin{equation*}
	\mathbf{E}\left[
	\left(\frac{I_2(l_{T,N})}{\psi^{\theta_0}_{T,N}}\right)^2\right] = \frac{2
		\|l_{T,N} \|^2}{(\psi^{\theta_0}_{T,N})^2} \leq C(\alpha,d, \theta_0,T)
	\frac{1}{N^{\frac{4 \alpha}{d} + 1}}
	\end{equation*}
	Hence, for every $\varepsilon > 0$, we have
	\begin{align*}
	\sum\limits_{N=1}^{+ \infty} \mathbf{P}\left(\left|\frac{I_2(l_{T,N})}{\psi^{\theta_0}_{T,N}} \right|> \varepsilon \right) \leq \frac{1}{\varepsilon^2} \sum\limits_{N=1}^{+ \infty} \mathbf{E}\left[\left(\frac{I_2(l_{T,N})}{\psi^{\theta_0}_{T,N}} \right)^2 \right] 
	 \leq \frac{C(\alpha, \theta_0, d, T)}{\varepsilon^2} \sum\limits_{N=1}^{+ \infty} \frac{1}{N^{\frac{4 \alpha}{d}+1}} < + \infty.
	\end{align*}
	Hence, from Borel-Cantelli's Lemma, for every $T > 0$, we have
	$\frac{I_2(l_{T,N})}{\psi^{\theta_0}_{T,N}} \longrightarrow 0 \text{ \ }  \text{a.s} \text{ \ } \text{as} \text{ \ }  N \rightarrow +\infty.$
	For the last deterministic term of the denominator, we have
	\begin{align*}
	\frac{\lambda_{T,N}}{\psi^{\theta_0}_{T,N}} & = 1 + \frac{1}{4 \theta_0^2 \psi^{\theta_0}_{T,N}} \sum \limits_{n =1}^{N} \left( e^{-2 \theta_0 \ko T} -1\right) \ \longrightarrow 1  \text{ \ } \text{as} \text{\ } N \rightarrow + \infty.
	\end{align*}
	Therefore the desired result follows.\\
Assume now that $N \geq 1$ is fixed, we can write
	\begin{equation*}
	\theta_0 - \hat{\theta}_{T,N}  =  \frac{\sum\limits_{n=1}^{N}
		\delta^{2 \alpha +  \gamma}_n \int_{0}^{T} y_n(t) dw_n(t)}{T}
	\times \frac{T}{\sum\limits_{n=1}^{N} \delta^{4 \alpha + 2
			\gamma}_n\int_{0}^{T} y^2_n(t) dt}
	\end{equation*}
	We have for any fixed $N \geq 1$,
	\begin{itemize}
		\item $\frac{1}{T} \sum\limits_{n=1}^{N} \delta^{2 \alpha +  \gamma}_n \int_{0}^{T} y_n(t) dw_n(t) \rightarrow 0 $ a.s. as $T \rightarrow + \infty$.
		\item $\frac{1}{T} \sum\limits_{n=1}^{N} \delta^{4 \alpha + 2 \gamma}_n\int_{0}^{T} y^2_n(t) dt \rightarrow \frac{1}{2 \theta_0} \sum\limits_{n=1}^{N} \delta^{2 (\alpha + \gamma)}_n$  a.s. as $T \rightarrow + \infty$.
	\end{itemize}
	In fact, using Ito's formula gives for all $n \geq 1$,
	\begin{equation*}
	y^2_n(T) = -2 \theta_0 \delta^{2 \alpha}_n\int_{0}^{T} y^2_n(t) dt + 2 \delta^{- \gamma}_n \int_{0}^{T} y_n(t) dw_n(t) + T
	\end{equation*}
	Thus,
	\begin{equation*}
	\frac{\delta^{- \gamma}_n}{T} \int_{0}^{T}y_n(t)dw_n(t) = \frac{y^2_n(T)}{2 T}  + \frac{\theta_0 \delta^{2 \alpha}_n }{T} \int_{0}^{T} y^2_n(t) dt - \frac{1}{2}.
	\end{equation*}
	We set $z_n(t) := \int_{- \infty}^{t} e^{- \theta_0  \delta^{2 \alpha}_n (t-u)} dw_n(t)$
	which is Gaussian, stationary and ergodic, thus by the ergodic
	theorem $$\frac{1}{T} \int_{0}^{T} z^2_n(t) dt \rightarrow
	\mathbf{E}[z^2_n(0)] = \frac{1}{2 \theta_0 \delta^{2 \alpha}_n}$$ a.s. as $T \rightarrow
	+\infty$. Since $z_n(t) = y_n(t) + e^{- \theta_0 \delta^{2 \alpha}_n t } \varepsilon_n(0)$
	where $\varepsilon_n(0) := \int_{-\infty}^{0} e^{\theta_0 \delta^{2 \alpha}_n u} dw_n(u)$.
	It follows that for all $n \geq 1$,
	$$\frac{1}{T} \int_{0}^{T} y^2_n(t) dt \rightarrow \frac{1}{2 \theta_0 \delta^{2 \alpha}_n} $$
	a.s. as $T \rightarrow +\infty$. Moreover, it was proved in \cite{HNZ} in Lemma 6.7 that for any $n \geq 1$, we have for any $\alpha> 0$, $ \frac{z_n(T)}{T^\alpha} \rightarrow 0$ a.s. as $T \rightarrow +\infty$. It follows that $\frac{y^2_n(T)}{T} \rightarrow 0$ a.s. as $T \rightarrow +\infty$ for any $n \geq 1$. Therefore, we get for any $n \geq 1$
	\begin{itemize}
		\item $\frac{1}{T} \int_{0}^{T} y_n(t)dw_n(t) \rightarrow 0$ a.s. as $T \rightarrow +\infty$.
		\item $\frac{1}{T} \int_{0}^{T} y^2_n(t) dt \rightarrow \frac{1}{2 \theta_0 \delta^{2 \alpha}_n}$  a.s. as $T \rightarrow +\infty$.
	\end{itemize}
	Then the desired result follows.
\end{proof}
\begin{theorem}\label{main-thm}
	Suppose that $\alpha > 0$, $\theta_0 >0$, $ \gamma \geq 0$, $d \geq 1$
	in equation (\ref{SPDE}). Let $\hat{\theta}_{T,N}$ be the MLE given
	in (\ref{MLE}) and $\psi^{\theta_0}_{T,N}$ given in (\ref{ENT}). Then there
	exists a positive constant $C(\alpha, d, \theta_0)$ depending on
	$\alpha$ the power of the Laplacian, $d$ is the space dimension and
	$\theta_0$ such that for any $N \geq 1$ and any $T>0$,
	
	\begin{equation}
	d_{W} \left(\sqrt{\psi^{\theta_0}_{T,N}} (\theta_0- \hat{\theta}_{T,N}), Z \right)     \leq C(\alpha, d, \theta_0) \times %
	\left\{
	\begin{array}{ll}
	\frac{1}{\sqrt {T}} \frac{1}{N^{\frac{2 \alpha}{d}}} & \mbox{ if } 2\alpha < d \\
	&  \\
	\frac{1}{\sqrt{T}} \frac{1}{N} & \mbox{ if } 2 \alpha =d%
	\\
	~~ &  \\
	\frac{1}{\sqrt{T}} \frac{1}{N^{\frac{\alpha}{d}+ \frac{1}{2}}} & \mbox{ if }2 \alpha > d,\\
	~~ &
	\end{array}%
	\right.
	\end{equation}
	where $Z \sim \mathcal{N}(0,1)$. Consequently as $T \rightarrow +\infty$, $N \rightarrow +\infty$
	\begin{equation*}
	\sqrt{T} N^{\frac{\alpha}{d}+\frac{1}{2}}\left(\theta_0 - \hat{\theta}_{T,N}\right)\overset{law}{\longrightarrow }\mathcal{N}%
	\left(0, \frac{(4\alpha /d +2) \theta_0}{\bar{\sigma}^{\alpha}} \right).
	\end{equation*}
\end{theorem}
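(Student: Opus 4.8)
The plan is to write the normalized error as the ratio
\[
\sqrt{\psi^{\theta_0}_{T,N}}\,(\theta_0-\hat{\theta}_{T,N}) = \frac{\sqrt{\psi^{\theta_0}_{T,N}}\,E_{T,N}}{\langle E_{T,N}\rangle},
\]
and to treat numerator and denominator separately. By \eqref{ETN} the numerator $E_{T,N}=I_2(g_{T,N}/2)$ lives in the second chaos $\HC_2$, and by \eqref{numerator} its variance is $\lambda_{T,N}$, equivalent to $\psi^{\theta_0}_{T,N}$ as $N,T\to+\infty$. I would first establish a quantitative CLT for the unit-variance variable $G_{T,N}:=E_{T,N}/\sqrt{\lambda_{T,N}}\in\HC_2$ via the optimal fourth moment theorem \eqref{optimal berry esseen}, and then propagate it through the ratio by a perturbation argument that controls the fluctuation of $\langle E_{T,N}\rangle$ around its mean $\lambda_{T,N}$, together with the mismatch between $\lambda_{T,N}$ and the normalizer $\psi^{\theta_0}_{T,N}$.

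For the cumulant estimates I would exploit that the Brownian motions $\{w_n\}$ are independent, so the kernels $g^n$ live in mutually orthogonal copies of $\HC$ and every cross contraction $g^n\otimes_1 g^m$ with $n\neq m$ vanishes. This collapses the cumulant formulas \eqref{3rd-cumulant}--\eqref{4th-cumulant} for $E_{T,N}$ into single sums of the per-mode Ornstein--Uhlenbeck quantities $\inner{g^n}{g^n\otimes_1 g^n}_{\HC^{\otimes 2}}$ and $\|g^n\otimes_1 g^n\|^2_{\HC^{\otimes 2}}$. Each reduces to an explicit integral of the kernel $e^{-\theta_0\ko|u-v|}$; after rescaling time by $\ko$ one sees that each term is a bounded function of the single variable $\theta_0\ko T$ which grows linearly for large argument, so that $k_3(E_{T,N})$ and $k_4(E_{T,N})$ are both of order $T\sum_{n=1}^N\ko\asymp T N^{2\alpha/d+1}$. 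Dividing by $\lambda_{T,N}^{3/2}$ and $\lambda_{T,N}^{2}$ respectively and using $\lambda_{T,N}\sim\psi^{\theta_0}_{T,N}$ gives
\[
|k_3(G_{T,N})|\asymp \frac{1}{\sqrt{T}\,N^{\alpha/d+1/2}}, \qquad k_4(G_{T,N})\asymp |k_3(G_{T,N})|^2 .
\]
Hence $\max\{k_4(G_{T,N}),|k_3(G_{T,N})|\}$ is governed by $|k_3(G_{T,N})|$, and \eqref{optimal berry esseen} (valid for $d_W$ by the cited remark) yields $d_W(G_{T,N},Z)\leq C(\alpha,d,\theta_0)\,T^{-1/2}N^{-\alpha/d-1/2}$, uniformly in $N,T$.

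It then remains to pass from $G_{T,N}$ to the full statistic. Writing $\langle E_{T,N}\rangle=\lambda_{T,N}+R_{T,N}$ with $R_{T,N}:=\theta_0^{-1}E_{T,N}-I_2(l_{T,N})$ of mean zero, I would expand $1/(1+R_{T,N}/\lambda_{T,N})$ and bound the resulting corrections in $L^1$, using $\|E_{T,N}\|_{L^2}=\sqrt{\lambda_{T,N}}\asymp\sqrt{\psi^{\theta_0}_{T,N}}$, the bound $\|l_{T,N}\|^2\lesssim N$ already obtained in the proof of Theorem \ref{consistency-thm}, and hypercontractivity \eqref{hypercontractivity} for the higher moments. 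The denominator fluctuation again contributes a term of order $T^{-1/2}N^{-\alpha/d-1/2}$, while the deterministic mismatch $|\lambda_{T,N}-\psi^{\theta_0}_{T,N}|\lesssim N$ enters through $|\psi^{\theta_0}_{T,N}-\lambda_{T,N}|/\psi^{\theta_0}_{T,N}$ and produces an error of order $N^{-2\alpha/d}$ in the spatial parameter (bounded by $T^{-1/2}N^{-2\alpha/d}$). Combining the two contributions by the triangle inequality for $d_W$ gives a total bound of order $T^{-1/2}N^{-\min(2\alpha/d,\ \alpha/d+1/2)}$, which is precisely the three-case statement: the minimum equals $2\alpha/d$ when $2\alpha<d$, equals $1$ (the two exponents coinciding) when $2\alpha=d$, and equals $\alpha/d+1/2$ when $2\alpha>d$.

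The final CLT then follows by rescaling: since $\psi^{\theta_0}_{T,N}\sim \bar\sigma^{\alpha} d\,T N^{2\alpha/d+1}/((4\alpha+2d)\theta_0)$, the factor $\sqrt{T}N^{\alpha/d+1/2}/\sqrt{\psi^{\theta_0}_{T,N}}$ converges to $\sqrt{(4\alpha/d+2)\theta_0/\bar\sigma^{\alpha}}$, and the Wasserstein bound tending to $0$ upgrades $d_W$-convergence to convergence in law with the announced variance. I expect the main obstacle to be twofold: obtaining the per-mode integral asymptotics uniformly in the product $\theta_0\ko T$, so that the constant $C(\alpha,d,\theta_0)$ is genuinely free of $N$ and $T$; and making the ratio perturbation rigorous, in particular ruling out that $\langle E_{T,N}\rangle$ is atypically small, which requires a lower-tail control of the denominator or working on a high-probability event and absorbing its complement into the bound.
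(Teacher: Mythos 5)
Your proposal follows essentially the same route as the paper: the same second-chaos representation $E_{T,N}=I_2(g_{T,N}/2)$, per-mode cumulant bounds combined with the optimal fourth moment theorem for the numerator (yielding the rate $T^{-1/2}N^{-\alpha/d-1/2}$), and a ratio-perturbation step in which the $L^2$ deviation of $\langle E_{T,N}\rangle/\psi^{\theta_0}_{T,N}$ from $1$ produces exactly the competing rates $T^{-1/2}N^{-\alpha/d-1/2}$ and $N^{-2\alpha/d}$ whose maximum gives the three cases. The lower-tail control of the denominator that you flag as the main remaining obstacle is precisely what the paper supplies via the negative-moment bound $\|\psi^{\theta_0}_{T,N}/\langle E_{T,N}\rangle\|_{L^4}<C(\theta_0)$ of Lemma \ref{CW} in the appendix, so your plan is sound and matches the paper's argument.
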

\begin{proof}
	We recall that from (\ref{MLE-exp}) and (\ref{MLE-theta0}), we have $\theta_0 - \hat{\theta}_{T,N}  = \frac{E_{T,N}}{< E_{T,N} >}= \frac{\frac{1}{2}I_2(g_{T,N})}{\frac{1}{2 \theta_0} I_2(g_{T,N})-
		I_2(l_{T,N}) + \lambda_{T,N}}$.\\
On the other hand using the calculus (\ref{numerator}), the following estimate holds
	\begin{equation*}
	\mathbf{E}\left[ \left(
	\frac{E_{T,N}}{\sqrt{\psi^{\theta_0}_{T,N}}}\right)^2\right] =
	\frac{\lambda_{N,T}}{\psi^{\theta_0}_{T,N}} = 1 + \frac{1}{4
		\theta^2_{0} \psi^{\theta_0}_{T,N}} \sum \limits_{n=1}^{N}(e^{- 2 \theta_0
		\ko T}-1).
	\end{equation*}
	Therefore,
	\begin{equation}\label{normENT}
	\left| \mathbf{E}\left[ \left(
	\frac{E_{T,N}}{\sqrt{\psi^{\theta_0}_{T,N}}}\right)^2\right] -1
	\right| = |\frac{\lambda_{N,T}}{\psi^{\theta_0}_{T,N}} - 1| \leq
	\frac{C(\alpha, d , \theta_0)}{T N^{\frac{2 \alpha}{d}}},
	\end{equation}
	where $C(\alpha, d, \theta)$ is a constant depending on $\alpha$, $d$
	and $\theta$. Moreover, since $\frac{E_{T,N}}{\sqrt{\psi^{\theta_0}_{T,N}}} \in
	\mathcal{H}_2$ and $\mathbf{E}\left[ \left(
	\frac{E_{T,N}}{\sqrt{\psi^{\theta_0}_{T,N}}}\right)^2\right] \leq 1$, then by
	the hypercontractivity property, we have
	$\| \frac{E_{T,N}}{\sqrt{\psi^{\theta_0}_{T,N}}} \|_{L^4} \leq 3 \| \frac{E_{T,N}}{\sqrt{\psi^{\theta_0}_{T,N}}} \|_{L^2} \leq 3.$
	\begin{lemma}
		Consider $\langle E_{T,N} \rangle$ defined in (\ref{crochet-ENT}) and $\psi^{\theta_0}_{T,N}$ defined in (\ref{ENT}). Then, there exists a constant depending on $\alpha$, $d$ and $\theta_0$, $C(\alpha,d,\theta_0)$ such that for any $N \geq 1$ and $T >0$, we have 
		\begin{equation}\label{crochetENT}
		\| \frac{<E_{T,N}>}{\psi^{\theta_0}_{T,N}} -1 \|_{L^2}    \leq C(\alpha, d, \theta_0)  \times b_{N,T}, \ \text{where} \	b_{N,T}:=  %
		\left\{
		\begin{array}{ll}
		\frac{1}{\sqrt {T}} \frac{1}{N^{\frac{2 \alpha}{d}}} & \mbox{ if } 2\alpha < d \\
		&  \\
		\frac{1}{\sqrt{T}} \frac{1}{N} & \mbox{ if } 2 \alpha =d%
		\\
		~~ &  \\
		\frac{1}{\sqrt{T}} \frac{1}{N^{\frac{\alpha}{d}+ \frac{1}{2}}} & \mbox{ if }2 \alpha > d. \\
		~~ &
		\end{array}%
		\right.
		\end{equation}
	\end{lemma}
\begin{proof}
	Indeed, using (\ref{crochet-ENT}), (\ref{normENT}) and Minkowski's inequality, we can write 
	\begin{align*}
	\| \frac{<E_{T,N}>}{\psi^{\theta_0}_{T,N}} -1 \|_{L^2} & \leq  \|\frac{E_{T,N}}{\theta_0 \psi^{\theta_0}_{T,N}} \|_{L^2} + \|\frac{I_2(h_{N,T})}{\psi^{\theta_0}_{T,N}} \|_{L^{2}}+ |\frac{\lambda_{N,T}}{\psi^{\theta_0}_{T,N}}-1| \leq \frac{1}{\theta_0} \frac{1}{\sqrt{\psi^{\theta_0}_{T,N}}} +
	\frac{C(\alpha, d, \theta_0)}{T N^{\frac{2\alpha}{d}+ \frac{1}{2}}} +
	\frac{C(\alpha,d, \theta_0)}{T N^{\frac{2\alpha}{d}}}.
	\end{align*}
	where we used for the estimate of $\|\frac{I_2(l_{T,N})}{\psi^{\theta_0}_{T,N}} \|_{L^{2}}$, the fact that 
	\begin{align*}
	\|l_{T,N} \|^2 &= \frac{1}{4 \theta^2_0} \sum \limits_{n=1}^{N} \delta^{4 \alpha}_n \int_{0}^{T} \int_{0}^{T} e^{2 \theta_0 \ko (u+v)} du dv  = \frac{1}{16 \theta^4_0} \sum\limits_{n=1}^{N} (1- e^{-2 \theta_0 \ko T})^2 \leq \frac{N}{16 \theta^4_0}. 
	\end{align*}
	We obtain therefore
	\begin{equation*}
	\mathbf{E}\left[
	\left(\frac{I_2(l_{T,N})}{\psi^{\theta_0}_{T,N}}\right)^2\right] =
	\frac{2 \|l_{T,N} \|^2}{(\psi^{\theta_0}_{T,N})^2} \leq C(\alpha,d,
	\theta_0) \frac{1}{T^2} \frac{1}{N^{\frac{4 \alpha}{d} + 1}}.
	\end{equation*}
\end{proof}
\begin{lemma}
	Consider $E_{T,N}$, $\psi^{\theta_0}_{T,N}$ defined respectively in (\ref{ENT}), then the following estimates hold 
	\begin{equation*}
	\kappa_3\left( \frac{E_{T,N}}{\sqrt{\psi^{\theta_0}_{T,N}}}\right) \leq \frac{3}{\theta_0 \sqrt{\psi^{\theta_0}_{T,N}}}, \ \ \text{and} \  \ \left|\kappa_4\left( \frac{E_{T,N}}{\sqrt{\psi^{\theta_0}_{T,N}}}\right)\right| \leq \frac{18}{ \theta^2_0 \psi^{\theta_0}_{T,N}}
	\end{equation*}
\end{lemma}
\begin{proof}
	we have by (\ref{3rd-cumulant})
	\begin{align*}
	\kappa_3\left( \frac{E_{T,N}}{\sqrt{\psi^{\theta_0}_{T,N}}}\right)& = \kappa_3\left( \frac{I_2(g_{T,N})}{2 \sqrt{\psi^{\theta_0}_{T,N}}}\right) = \mathbf{E}\left[\left(\frac{I_2(g_{T,N})}{2 \sqrt{\psi^{\theta_0}_{T,N}}}\right)^3 \right] = \sum\limits_{n=1}^{N} \mathbf{E}\left[ \left(\frac{I_2(g^n)}{2 \sqrt{\psi^{\theta_0}_{T,N}}}\right)^3\right] \\
	& = \frac{1}{(\psi^{\theta_0}_{T,N})^{3/2}} \sum\limits_{n=1}^{N} <g^n,
	g^n \otimes_{1} g^n>_{\mathcal{H}^{\otimes 2}}.
	\end{align*}
	Moreover, we have for every $n= 1,...,N$
	\begin{align*}
	<g^n, g^n \otimes_{1} g^n>_{\mathcal{H}^{\otimes 2}} & =
	\int_{0}^{T} \int_{0}^{T} \int_{0}^{T} g^n(x,y) g^n(x,t) g^n(y,t) dt dx dy \\
	& = \delta^{6 \alpha}_{n} \int_{[0,T]^{3}} e^{- \theta_0 \ko |x-y|} e^{ - \theta_0 \ko |x-t|} e^{- \theta_0 \ko |y-t|} dt dx dy \\
	& =  3! \delta^{6 \alpha}_{n} \int_{0}^{T} \int_{0}^{y} \int_{0}^{t} e^{- \theta_0 \ko (y-x)} e^{ - \theta_0 \ko (t-x)} e^{ - \theta_0 \ko (t-y)} dt dx dy \\
	& =  3! \delta^{6 \alpha}_{n} \int_{0}^{T} \int_{0}^{y} \int_{0}^{t} e^{ - \theta_0 \ko (2t -2x)} dt dx dy \\
	&= \frac{3 \delta^{4 \alpha}_n}{\theta_0} \int_{0}^{T} \left( \frac{1-
		e^{- 2 \theta_0 \ko t }}{2 \theta_0 \ko} - t e^{ - 2 \theta_0 \ko t}\right) dt.
	\end{align*}
	Thus,
	\begin{equation*}
	\kappa_3\left( \frac{E_{T,N}}{\sqrt{\psi^{\theta_0}_{T,N}}}\right) \leq \frac{3 T}{2  \theta_0 (\psi^{\theta_0}_{T,N})^{3/2}} \sum \limits_{n=1}^{N} \ko = \frac{3}{\theta_0 \sqrt{\psi^{\theta_0}_{T,N}}}.
	\end{equation*}
	We also have by (\ref{4th-cumulant})
	\begin{align*}
	|\kappa_4\left( \frac{E_{T,N}}{\sqrt{\psi^{\theta_0}_{T,N}}}\right)| & = |\kappa_{4} \left(\frac{I_2(g_{T,N})}{2 \sqrt{\psi^{\theta_0}_{T,N}}}\right)| \\
	& \leq \frac{3}{(\psi^{\theta_0}_{T,N})^2} \sum\limits_{n=1}^{N} \| g^n \otimes_{1} g^n\|^2_{\mathcal{H}^{\otimes 2}} \\
	& = \frac{3}{(\psi^{\theta_0}_{T,N})^2} \sum\limits_{n=1}^{N} \int_{[0,T]^4} g^n(x_1,x_2) g^n(x_2,x_3) g^n(x_3,x_4) g^n(x_4,x_1)dx_1 dx_2 dx_3dx_4 \\
	& = \frac{3}{(\psi^{\theta_0}_{T,N})^2} \sum \limits_{n=1}^{N} 4! \delta^{8 \alpha}_n \int_{0}^{T}
	\int_{0}^{x_4} \int_{0}^{x_3} \int_{0}^{x_2} e^{- \theta_0 \ko (2x_4-2x_1)} dx_1 dx_2dx_3 dx_4 \\
	& \leq \frac{18}{ \theta^2_0 \psi^{\theta_0}_{T,N}}.
	\end{align*}
\end{proof}
Consequently, there exists a positive constant $C(\theta_0)$ depending only on $\theta_0$ such that for every $N \geq 1$, $T > 0$, we have
\begin{equation*}
d_{W}\left(\frac{E_{T,N}}{\sqrt{\psi^{\theta_0}_{N,T}}}, Z\right) \leq \frac{C(\theta_0)}{\sqrt{\psi^{\theta_0}_{N,T}}},
\end{equation*}
where $ Z \sim \mathcal{N}(0,1)$. On the other hand,
\begin{equation*}
\sqrt{\psi^{\theta_0}_{T,N}} (\theta_0 - \hat{\theta}_{T,N}) =
\frac{E_{T,N}/ \sqrt{\psi^{\theta_0}_{T,N}}}{<E_{T,N}>/
	\psi^{\theta_0}_{T,N}}.
\end{equation*}
Thus,
\begin{align*}
&d_W \left(\sqrt{\psi^{\theta_0}_{T,N}} (\theta_0 - \hat{\theta}_{T,N}), Z \right) = d_{W} \left(
\frac{E_{T,N}/ \sqrt{\psi^{\theta_0}_{T,N}}}{<E_{T,N}>/ \psi^{\theta_0}_{T,N}}, Z\right) \\
& \leq d_{W}\left( \frac{E_{T,N}}{\sqrt{\psi^{\theta_0}_{T,N}}}, Z\right) + d_W\left(\frac{E_{T,N}/ \sqrt{\psi^{\theta_0}_{T,N}}}{<E_{T,N}>/ \psi^{\theta_0}_{T,N}}, \frac{E_{T,N}}{\sqrt{\psi^{\theta_0}_{T,N}}} \right) \\
& \leq d_{W}\left( \frac{E_{T,N}}{\sqrt{\psi^{\theta_0}_{T,N}}}, Z\right) + \mathbf{E}\left[ \left|(\frac{E_{T,N}/ \sqrt{\psi^{\theta_0}_{T,N}}}{<E_{T,N}>/ \psi^{\theta_0}_{T,N}} - \frac{E_{T,N}}{\sqrt{\psi^{\theta_0}_{T,N}}} \right|\right] \\
& \leq d_{W}\left( \frac{E_{T,N}}{\sqrt{\psi^{\theta_0}_{T,N}}}, Z\right) + \|\frac{E_{T,N}/ \sqrt{\psi^{\theta_0}_{T,N}}}{<E_{T,N}>/ \psi^{\theta_0}_{T,N}} \|_{L^2} \times \|1- \frac{<E_{T,N}>} {\psi^{\theta_0}_{T,N}}\|_{L^2} \\
& \leq    d_{W}\left( \frac{E_{T,N}}{\sqrt{\psi^{\theta_0}_{T,N}}}, Z\right) + \| \frac{E_{T,N}}{\sqrt{\psi^{\theta_0}_{T,N}}} \|_{L^4} \times \|\frac{\psi^{\theta_0}_{T,N}}{<E_{T,N}>} \|_{L^4} \times \|1- \frac{<E_{T,N}>} {\psi^{\theta_0}_{T,N}} \|_{L^2}.
\end{align*}
The desired result follows from the previous calculus along with
Lemma \ref{CW} which implies the existence of a constant $C(\theta_0)$
such that $\|\frac{\psi^{\theta_0}_{T,N}}{<E_{T,N}>} \|_{L^4} < C(\theta_0)$.
The following corollary is a direct consequence of Theorem
\ref{main-thm}.
\end{proof}
\begin{corollary}\label{cor. of main-thm}
	Suppose that $\alpha > 0$, $\theta_0 >0$, $ \gamma \geq 0$, $d \geq 1$ in equation (\ref{SPDE}). Let $\hat{\theta}_{T,N}$ be the MLE given in (\ref{MLE}). Then, we have
	\begin{itemize}
		\item If $N \rightarrow +\infty$ and $T$ is fixed, then there exists a constant $C(\alpha, d, \theta_0,T)$ such that
		\begin{equation}
		d_{W} \left(N^{\frac{\alpha}{d} + \frac{1}{2}} (\theta_0- \hat{\theta}_{T,N}), \mathcal{N} \left(\frac{(4 \alpha/d +2)\theta_0}{\bar{\sigma}^\alpha T}\right) \right)     \leq C(\alpha, d, \theta_0,T) \times %
		\left\{
		\begin{array}{ll}
		\frac{1}{N^{\frac{2 \alpha}{d}}} & \mbox{ if } 2\alpha < d \\
		&  \\
		\frac{1}{N} & \mbox{ if } 2 \alpha =d%
		\\
		~~ &  \\
		\frac{1}{N^{\frac{\alpha}{d}+ \frac{1}{2}}} & \mbox{ if }2 \alpha > d. \\
		~~ &
		\end{array}%
		\right.
		\end{equation}
		\item If $T \rightarrow +\infty$ and $N$ is fixed, then there exists a constant $C(\alpha, d, \theta_0,N)$ such that
		\begin{equation}
		d_{W} \left(\sqrt{T} (\theta_0- \hat{\theta}_{T,N}), \mathcal{N}\left(0, \frac{2 \theta_0}{\sum \limits_{n=1}^{N} \ko} \right) \right)     \leq   \frac{C(\alpha, d, \theta_0,N)}{\sqrt{T}}.
		\end{equation}
		\item If $T \rightarrow + \infty$ and $N \rightarrow +\infty$, then there exists a  constant $C(\alpha, d, \theta_0) $ such that
		\begin{equation}
		d_{W} \left(\sqrt{T} N^{\frac{\alpha}{d} + \frac{1}{2}} (\theta_0- \hat{\theta}_{T,N}), \mathcal{N} \left(\frac{(4 \alpha/d +2)\theta_0}{\bar{\sigma}^\alpha}\right) \right)     \leq C(\alpha, d, \theta_0) \times %
		\left\{
		\begin{array}{ll}
		\frac{1}{\sqrt {T}} \frac{1}{N^{\frac{2 \alpha}{d}}} & \mbox{ if } 2\alpha < d \\
		&  \\
		\frac{1}{\sqrt{T}} \frac{1}{N} & \mbox{ if } 2 \alpha =d%
		\\
		~~ &  \\
		\frac{1}{\sqrt{T}} \frac{1}{N^{\frac{\alpha}{d}+ \frac{1}{2}}} & \mbox{ if }2 \alpha > d. \\
		~~ &
		\end{array}%
		\right.
		\end{equation}
	\end{itemize}
\end{corollary}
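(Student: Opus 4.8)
The plan is to read off all three statements directly from Theorem~\ref{main-thm}, which provides $d_W\big(\sqrt{\psi^{\theta_0}_{T,N}}\,(\theta_0-\hat\theta_{T,N}),Z\big)\le C(\alpha,d,\theta_0)\,b_{N,T}$ with $Z\sim\mathcal N(0,1)$ (here $b_{N,T}$ abbreviates the three-regime right-hand side), by rescaling the normalization from $\sqrt{\psi^{\theta_0}_{T,N}}$ to the explicit powers of $T$ and $N$ in each bullet; the one-argument Gaussians in the statement are read as centered with the indicated variance. Two elementary properties of the Wasserstein distance do the bookkeeping: homogeneity, $d_W(cX,cY)=c\,d_W(X,Y)$ for $c>0$, and the exact value $d_W\big(\mathcal N(0,\sigma_1^2),\mathcal N(0,\sigma_2^2)\big)=\sqrt{2/\pi}\,|\sigma_1-\sigma_2|$ for centered Gaussians. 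The structural inputs are the exact identity $\psi^{\theta_0}_{T,N}=\frac{T}{2\theta_0}\sum_{n=1}^N\ko$ from (\ref{ENT}) and the eigenvalue asymptotics $\sum_{n=1}^N\ko\sim\bar{\sigma}^{\alpha}N^{2\alpha/d+1}/(2\alpha/d+1)$ implied by (\ref{vk}).

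The bullet with $N$ fixed and $T\to+\infty$ is the cleanest, since there $\psi^{\theta_0}_{T,N}=c_N^2\,T$ with $c_N^2=\frac{1}{2\theta_0}\sum_{n=1}^N\ko$ \emph{exactly} proportional to $T$. Writing $\sqrt T\,(\theta_0-\hat\theta_{T,N})=c_N^{-1}\sqrt{\psi^{\theta_0}_{T,N}}\,(\theta_0-\hat\theta_{T,N})$ and invoking homogeneity with $c=c_N^{-1}$ turns $Z$ into $c_N^{-1}Z\sim\mathcal N\big(0,\tfrac{2\theta_0}{\sum_{n=1}^N\ko}\big)$, which is exactly the target law; with $N$ fixed each of the three regimes of $b_{N,T}$ collapses to $C(\alpha,d,\theta_0,N)/\sqrt T$, the asserted rate. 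No asymptotic approximation of $\psi$ enters, so the variance is matched exactly.

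For the two bullets with $N\to+\infty$, set $b_N:=\sqrt T\,N^{\alpha/d+1/2}/\sqrt{\psi^{\theta_0}_{T,N}}$ (case $T,N\to\infty$) or $a_N:=b_N/\sqrt T$ (case $T$ fixed). Using the identity for $\psi^{\theta_0}_{T,N}$, $b_N$ is in fact independent of $T$, and by the eigenvalue asymptotics $b_N\to b_\infty:=\sqrt{(4\alpha/d+2)\theta_0/\bar{\sigma}^{\alpha}}$, whose square is the target variance. Writing the normalized error as $b_N\cdot\sqrt{\psi^{\theta_0}_{T,N}}\,(\theta_0-\hat\theta_{T,N})$ and applying the triangle inequality,
\begin{align*}
d_W\big(\sqrt T\,N^{\alpha/d+1/2}(\theta_0-\hat\theta_{T,N}),\mathcal N(0,b_\infty^2)\big)\le b_N\,d_W\big(\sqrt{\psi^{\theta_0}_{T,N}}(\theta_0-\hat\theta_{T,N}),Z\big)+d_W\big(\mathcal N(0,b_N^2),\mathcal N(0,b_\infty^2)\big).
\end{align*}
The first term is at most $b_N\,C\,b_{N,T}$, and since $b_N$ is bounded this reproduces exactly the rate on the right-hand side (in the $T$-fixed case the constant factor $1/\sqrt T$ and $\lim a_N$ are absorbed into $C(\alpha,d,\theta_0,T)$).

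The main obstacle is the second term, $\sqrt{2/\pi}\,|b_N-b_\infty|$, i.e. the error from replacing the true normalization by its leading asymptotics; this is the only place where the \emph{rate} in (\ref{vk}), not merely the limit, is needed. Writing $\ko=\bar{\sigma}^{\alpha}n^{2\alpha/d}(1+\varepsilon_n)$ with $\varepsilon_n\to0$ and summing via Euler--Maclaurin, $\sum_{n=1}^N n^{2\alpha/d}=\frac{N^{2\alpha/d+1}}{2\alpha/d+1}(1+O(1/N))$, one bounds $|b_N-b_\infty|$ and checks it is dominated by $b_{N,T}$ in the stated regime (for instance when $2\alpha<d$ the relative error $O(1/N)$ is $o(N^{-2\alpha/d})$). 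I would stress that this variance-comparison term is $T$-independent, so the three bullets are to be understood as rate statements in the indicated asymptotic regimes rather than inequalities uniform in $T$; for fixed $N$ it vanishes, and for $N\to\infty$ it is of the claimed order once the summation asymptotics are made quantitative.
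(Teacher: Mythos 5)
Your proposal is correct and follows exactly the route the paper intends: the paper's entire proof of this corollary is the single sentence that it is a ``direct consequence of Theorem \ref{main-thm}'', and your rescaling argument via the homogeneity $d_W(cX,cY)=c\,d_W(X,Y)$, the exact identity $\psi^{\theta_0}_{T,N}=\frac{T}{2\theta_0}\sum_{n=1}^N\delta_n^{2\alpha}$, and the Gaussian formula $d_W(\mathcal N(0,\sigma_1^2),\mathcal N(0,\sigma_2^2))=\sqrt{2/\pi}\,|\sigma_1-\sigma_2|$ is the natural way to make that precise. Your closing observation is a genuine point the paper glosses over: in the first and third bullets the variance-comparison term $\sqrt{2/\pi}\,|b_N-b_\infty|$ is $T$-independent and is controlled only by the unquantified rate of convergence in (\ref{vk}), so those two bounds can only be read as asymptotic rate statements in the indicated regimes (or under an additional quantitative spectral assumption), whereas the second bullet ($N$ fixed) is exact as you note.
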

\begin{example}
	If the power of the Laplacian in equation (\ref{SPDE}) equals 1: $\alpha =1$, then equation (\ref{SPDE}) becomes the classical stochastic heat equation with an additive space-time white noise as follows :
	\begin{equation*}
	\left\{
	\begin{array}{ll}
	dY_\theta(t,x) - \theta \Delta Y_\theta(t,x) dt = \sum \limits_{n \in \mathbb{N}}
	\delta^{-\gamma}_n e_n(x) dw_n(t), \text{ \ } x \in G &  \\
	~~ &  \\
	Y_\theta(0,x) = 0,  \text{ \ } x \in G.
	\end{array}%
	\right.
	\end{equation*}
	Then, in this case, if $G \subset \mathbb{R}$, namely $d =1$, there exists a constant $C(\theta_0)$ depending only on $\theta_0$ such that
	the following bound of convergence in $d_W$ holds for the MLE
	$\hat{\theta}_{T,N}$ :
	\begin{equation*}
	d_{W} \left(\sqrt{T} N^{3/2}(\theta_0- \hat{\theta}_{T,N}),
	\mathcal{N}\left(0, \frac{6 \theta_0}{\bar{\sigma}} \right) \right)
	\leq   \frac{C(\theta_0)}{\sqrt{T}N^{3/2}}.
	\end{equation*}
\end{example}
\section{Approximate maximum likelihood estimator}
In this section, we study the approximative MLE of the parameter $\theta$, defined
by :
\begin{eqnarray}\label{tildetheta}
\widetilde{\theta}_{T,N,M}&:=&-\frac{\sum_{n=1}^N \delta_n^{2
		\alpha+2 \gamma} \sum_{i=1}^M
	y_n\left(t_{i-1}\right)\left[y_n\left(t_i\right)-y_n\left(t_{i-1}\right)\right]}{\Delta_M\sum_{n=1}^N
	\delta_n^{4 \alpha + 2 \gamma} \sum_{i=1}^M
	y_n^2\left(t_{i-1}\right)}\\&=&-\frac{\sum_{n=1}^N \delta_n^{2
		\alpha} \sum_{i=1}^M
	v_n\left(t_{i-1}\right)\left[v_n\left(t_i\right)-v_n\left(t_{i-1}\right)\right]}{\Delta_M\sum_{n=1}^N
	\delta_n^{4 \alpha} \sum_{i=1}^M v_n^2\left(t_{i-1}\right)},
\end{eqnarray}
which is a discretized version of the MLE given by \eqref{MLE},
where
$$y_n(t) = \delta_n^{-\gamma} \int_{0}^{t} e^{- \theta_0 \delta_n^{2 \alpha} (t-s)} dw_n(s),
\qquad v_n(t) =  \int_{0}^{t} e^{- \theta_0 \delta_n^{2 \alpha} (t-s)}
dw_n(s), \ n = 1,...,N.$$
In what
follows, we assume that the Fourier modes $y_n(t), n \geq 1$, are
observed at a uniform time grid:
$$
0=t_0<t_1<\cdots<t_M=T, \quad \text { with }
\Delta_M:=t_i-t_{i-1}=\frac{T}{M}, i=1, \ldots, M.
$$
We are interested in studying the consistency and the  rate of convergence of the
distribution of  $\widetilde{\theta}_{T,N,M}$ as $N, M, T
\rightarrow \infty$. For this aim, let us introduce the following sequences
\begin{eqnarray}
S_{N, M}:=\Delta_M\sum_{n=1}^N \delta_n^{4 \alpha} \sum_{i=1}^M
v_n^2\left(t_{i-1}\right), \ \ \text{and} \ \ \Lambda_{N, M}&:=&\sum_{n=1}^N \delta_n^{2 \alpha} \sum_{i=1}^M
e^{-\theta_0 \delta_n^{2 \alpha} t_{i}}
v_n\left(t_{i-1}\right)\left[\eta_n\left(t_i\right)-\eta_n\left(t_{i-1}\right)\right]\nonumber\\
&=&\sum_{n=1}^N \delta_n^{2 \alpha} \sum_{i=1}^Me^{-\theta_0
	\delta_n^{2 \alpha}(t_{i}+t_{i-1})}
\eta_n\left(t_{i-1}\right)\left[\eta_n\left(t_i\right)-\eta_n\left(t_{i-1}\right)\right]\nonumber\\
&=:&\sum_{n=1}^N B_{n,M},\label{Lambda-zeta}
\end{eqnarray} where $\eta_{n}(t)=\int_{0}^{t}e^{\theta_0 \delta_n^{2 \alpha}
	s}dw_n(s). \label{zeta}$ Thus,
\begin{eqnarray}
-\widetilde{\theta}_{T,N,M}=\frac{\sum_{n=1}^N
	\delta_n^{2\alpha}\left(e^{-\delta_n^{2 \alpha}\theta_0
		\Delta_{M}}-1\right)\sum_{i=1}^M
	v_n^2\left(t_{i-1}\right)}{\Delta_M\sum_{n=1}^N \delta_n^{4 \alpha}
	\sum_{i=1}^M v_n^2\left(t_{i-1}\right)}+\frac{\Lambda_{N, M}}{S_{N,
		M}}.\label{eq1.theta-tilde}
\end{eqnarray}
Hence
\begin{eqnarray}
\theta_0-\widetilde{\theta}_{T,N,M}&=&\frac{\sum_{n=1}^N
	\delta_n^{2\alpha}\left(e^{-\delta_n^{2 \alpha}\theta_0
		\Delta_{M}}-1+\delta_n^{2 \alpha}\theta_0
	\Delta_{M}\right)\sum_{i=1}^M
	v_n^2\left(t_{i-1}\right)}{\Delta_M\sum_{n=1}^N \delta_n^{4 \alpha}
	\sum_{i=1}^M v_n^2\left(t_{i-1}\right)}+\frac{\Lambda_{N, M}}{S_{N,
		M}}\label{eq2.theta-tilde}\\
&=:&\frac{R_{N, M}}{S_{N, M}}+\frac{\Lambda_{N, M}}{S_{N,
		M}}.\label{eq3.theta-tilde}
\end{eqnarray}
\subsection{Consistency of the estimator $\widetilde{\theta}_{T,N,M}$}
\begin{proposition}\label{consistency}
	Assume that
	\begin{equation}\label{hypothese}
	\Delta_M N^{\frac{2 \alpha}{d}} \rightarrow 0, \text{ \ } \text{as} \text{ \ } N,M,T \rightarrow + \infty.
	\end{equation}
	Then the estimator $\widetilde{\theta}_{T,N,M}$ is weakly consistent, namely the following convergence holds in probability
	\begin{equation*}
	\widetilde{\theta}_{T,N,M} \rightarrow \theta_0, \text{ \ }  \text{as} \text{ \ } N,M,T \rightarrow + \infty.
	\end{equation*}
\end{proposition}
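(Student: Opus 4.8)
The plan is to start from the decomposition (\ref{eq3.theta-tilde}),
$$\theta_0-\widetilde{\theta}_{T,N,M}=\frac{R_{N,M}}{S_{N,M}}+\frac{\Lambda_{N,M}}{S_{N,M}},$$
and to treat the two ratios separately, normalising everything by $\psi^{\theta_0}_{T,N}$ from (\ref{ENT}). Concretely I would establish three facts: (i) $R_{N,M}/S_{N,M}\to0$ almost surely; (ii) $\Lambda_{N,M}/\psi^{\theta_0}_{T,N}\to0$ in $L^2(\Omega)$; and (iii) $S_{N,M}/\psi^{\theta_0}_{T,N}\to1$ in probability. Since $S_{N,M}>0$ and, by (iii), $S_{N,M}/\psi^{\theta_0}_{T,N}$ stays bounded away from $0$ with probability tending to $1$, combining (ii) and (iii) through Slutsky's lemma gives $\Lambda_{N,M}/S_{N,M}\to0$ in probability, and together with (i) this yields the claim.

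The bias ratio (i) is controlled pathwise. With $x_n:=\theta_0\delta_n^{2\alpha}\Delta_M\geq0$ the elementary bound $0\leq e^{-x_n}-1+x_n\leq\tfrac12 x_n^2$ shows that the coefficient of $\sum_{i=1}^M v_n^2(t_{i-1})$ in $R_{N,M}$ is at most $\tfrac12\theta_0^2\Delta_M^2\delta_n^{6\alpha}$, while the coefficient of the same nonnegative quantity in $S_{N,M}$ equals $\Delta_M\delta_n^{4\alpha}$. As both $R_{N,M}$ and $S_{N,M}$ are nonnegative combinations of the terms $\sum_i v_n^2(t_{i-1})\geq0$, the ratio is bounded by the largest ratio of coefficients, so that
$$\frac{|R_{N,M}|}{S_{N,M}}\leq\max_{1\leq n\leq N}\frac{\tfrac12\theta_0^2\Delta_M^2\delta_n^{6\alpha}}{\Delta_M\delta_n^{4\alpha}}=\frac{\theta_0^2}{2}\,\Delta_M\,\delta_N^{2\alpha}\sim\frac{\theta_0^2\bar{\sigma}^{\alpha}}{2}\,\Delta_M N^{\frac{2\alpha}{d}}\longrightarrow0,$$
which is precisely where the standing hypothesis (\ref{hypothese}) enters.

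For (ii), by (\ref{Lambda-zeta}) the quantity $\Lambda_{N,M}=\sum_{n=1}^N B_{n,M}$ is a sum over $n$ of independent centred terms, and each $B_{n,M}$ is a sum over $i$ of martingale increments, because $\eta_n(t_{i-1})$ is $\mathcal{F}_{t_{i-1}}$-measurable while $\eta_n(t_i)-\eta_n(t_{i-1})$ is centred and independent of $\mathcal{F}_{t_{i-1}}$. Hence all cross terms vanish and
$$\mathbf{E}[\Lambda_{N,M}^2]=\sum_{n=1}^N\delta_n^{4\alpha}\sum_{i=1}^M e^{-2\theta_0\delta_n^{2\alpha}(t_i+t_{i-1})}\,\mathbf{E}[\eta_n^2(t_{i-1})]\,\mathbf{E}\big[(\eta_n(t_i)-\eta_n(t_{i-1}))^2\big].$$
Each Gaussian second moment is an explicit exponential in the grid points, and summing the resulting geometric series yields the clean bound $\mathbf{E}[B_{n,M}^2]\leq\frac{T}{2\theta_0}\delta_n^{2\alpha}$, whence $\mathbf{E}[\Lambda_{N,M}^2]\leq\psi^{\theta_0}_{T,N}$. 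Therefore $\mathbf{E}\big[(\Lambda_{N,M}/\psi^{\theta_0}_{T,N})^2\big]\leq 1/\psi^{\theta_0}_{T,N}\to0$, since $\psi^{\theta_0}_{T,N}\to+\infty$ as $N,T\to+\infty$.

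The remaining point (iii) is the one I expect to be the crux, and I would attack it by the second moment method. Using $\mathbf{E}[v_n^2(t)]=(1-e^{-2\theta_0\delta_n^{2\alpha}t})/(2\theta_0\delta_n^{2\alpha})$ and summing the Riemann sum, one finds $\mathbf{E}[S_{N,M}]=\psi^{\theta_0}_{T,N}+\mathcal{O}(N)+\mathcal{O}\big(\Delta_M\sum_{n=1}^N\delta_n^{2\alpha}\big)$, both error terms being $o(\psi^{\theta_0}_{T,N})$. For the variance I would exploit independence in $n$ together with Wick's formula: the $v_n(t_{i-1})$ being jointly Gaussian, $\mathrm{Cov}(v_n^2(t_{i-1}),v_n^2(t_{j-1}))=2\rho_{ij}^2$ with $0\leq\rho_{ij}\leq(2\theta_0\delta_n^{2\alpha})^{-1}e^{-\theta_0\delta_n^{2\alpha}|t_{i-1}-t_{j-1}|}$; bounding the two-dimensional geometric sum $\sum_{i,j}e^{-\theta_0\delta_n^{2\alpha}|t_{i-1}-t_{j-1}|}$ leads to $\mathrm{Var}(S_{N,M})\leq\theta_0^{-2}\psi^{\theta_0}_{T,N}+\mathcal{O}\big(\Delta_M T\sum_{n=1}^N\delta_n^{4\alpha}\big)=o\big((\psi^{\theta_0}_{T,N})^2\big)$. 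Chebyshev's inequality then gives (iii), and Slutsky's lemma concludes. The delicate part is the exponential-sum bookkeeping and, in particular, verifying that the non-stationary boundary contributions near $t=0$ stay of lower order than the leading term $\psi^{\theta_0}_{T,N}$; this is the main obstacle.
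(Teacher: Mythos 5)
Your proof is correct, and its overall skeleton is the same as the paper's: the decomposition $\theta_0-\widetilde{\theta}_{T,N,M}=R_{N,M}/S_{N,M}+\Lambda_{N,M}/S_{N,M}$, second-moment (Chebyshev/Markov) control of $\Lambda_{N,M}/\psi^{\theta_0}_{T,N}$ and of $S_{N,M}/\psi^{\theta_0}_{T,N}-1$, and a combination step (your Slutsky argument versus the paper's union bound with an auxiliary $\delta$). The genuine difference is your treatment of the bias term: you observe that $R_{N,M}$ and $S_{N,M}$ are nonnegative linear combinations of the \emph{same} nonnegative quantities $\sum_i v_n^2(t_{i-1})$, so the mediant inequality gives the deterministic bound $|R_{N,M}|/S_{N,M}\leq \tfrac{1}{2}\theta_0^2\Delta_M\delta_N^{2\alpha}\to 0$ pathwise under the hypothesis $\Delta_M N^{2\alpha/d}\to 0$. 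The paper instead proves an $L^2$ bound $\mathbf{E}[(R_{N,M}/\psi^{\theta_0}_{T,N})^2]\leq C\Delta_M^2N^{4\alpha/d}$ (Lemma \ref{RNM}), which requires Isserlis' theorem and a page of moment computations, and then still needs the control of $S_{N,M}/\psi^{\theta_0}_{T,N}$ to convert it into a statement about the ratio. Your route is more elementary, yields almost sure (indeed sure) convergence of that term, and decouples it entirely from the denominator; the paper's quantitative $L^2$ estimate is, however, what gets reused in Section 4.2 to bound $\sqrt{\psi^{\theta_0}_{T,N}}\,\mathbf{E}[|R_{N,M}/S_{N,M}|]$ for the Berry--Ess\'een rate, so the extra work there is not wasted. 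Your estimates for the other two pieces ($\mathbf{E}[\Lambda_{N,M}^2]\leq\psi^{\theta_0}_{T,N}$ via the martingale/independence structure, and the Wick-formula variance bound for $S_{N,M}$) match the paper's Lemmas \ref{estim-lambda} and \ref{norm-SNM}, with Isserlis/Wick replacing the paper's chaos-isometry computation; the "boundary bookkeeping" you flag as the main obstacle does go through exactly as you anticipate.
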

\begin{proof}
	Following the decomposition (\ref{eq3.theta-tilde}), we have for any $\varepsilon >0$
	\begin{equation}
	\mathbf{P}\left(|\widetilde{\theta}_{T,N, M} - \theta_0 | > \varepsilon \right) \leq \mathbf{P}\left( \left|\frac{R_{N,M}}{S_{N,M}} \right| > \varepsilon/2  \right) + \mathbf{P}\left( \left|\frac{\Lambda_{N,M}}{S_{N,M}} \right| > \varepsilon/2  \right)
	\end{equation}
	Moreover, for any arbitrary fixed $\delta \in (0, \varepsilon/2)$, we have
	\begin{equation}\label{dec-conv-prob}
	\mathbf{P}\left( |\widetilde{\theta}_{T,N,M} - \theta_0| > \varepsilon \right) \leq \mathbf{P}\left( \left|\frac{R_{N,M}}{\psi^{\theta_0}_{T,N}}\right| > \delta  \right) + \mathbf{P}\left( \left|\frac{\Lambda_{N,M}}{\psi^{\theta_0}_{T,N}} \right| > \delta  \right)
	+ 2 \mathbf{P}\left( \left|\frac{S_{N,M}}{\psi^{\theta_0}_{T,N}} - 1 \right| > \frac{\varepsilon - 2\delta}{\varepsilon} \right)
	\end{equation}
	We recall that
	\begin{equation*}
	\psi^{\theta_0}_{T,N} = \frac{T}{2 \theta_0} \sum \limits_{n=1}^{N} \delta^{2 \alpha}_{n} \sim \frac{\bar{\sigma} d T N^{\frac{2 \alpha}{d}+1}}{(4 \alpha + 2d) \theta_0} \text{ \ } \text{as} \text{ \ } N,T \rightarrow +\infty.
	\end{equation*}
	For each term included in the inequality (\ref{dec-conv-prob}), the second moment need to be estimated by Markov's inequality. 
	\begin{lemma}\label{RNM}
		Consider the sequence $R_{N,M}$ defined in (\ref{eq3.theta-tilde}) and assume that the assumption (\ref{hypothese}) holds, then there exists a constant $C(\theta_0,\alpha, d, \bar{\sigma})$ such that for any $N,T,M$ large, we have
		\begin{equation*}
		\mathbf{E}\left[\left(\frac{R_{N,M}}{\psi^{\theta_0}_{T,N}} \right)^2\right] \leq
		C(\theta_0,\alpha, d, \bar{\sigma}) \Delta^2_M N^{4 \alpha /d}.
		\end{equation*}
	\end{lemma}
\begin{proof}
	Recall that the sequence $R_{N,M}$ defined in (\ref{eq3.theta-tilde}) can be written as follows
	\begin{equation*}
	R_{N,M} := \sum\limits_{n=1}^{N} s_n \sum\limits_{i=1}^{M} v^{2}_{n}(t_{i-1}),
	\end{equation*}
	where $$ s_{n} := \delta^{2\alpha}_{n} \left(e^{-\delta^{2\alpha}_n \theta_0 \Delta_{M}} - 1 + \delta^{2\alpha}_n \theta_0 \Delta_{M} \right), \text{ \ } n =1,...,N. $$
	\\
	Therefore,
	\begin{align*}
	\mathbf{E}\left[ R^2_{N,M} \right]  &=  \sum\limits_{n,l=1}^{N} s_n s_l \sum\limits_{i,j=1}^{M} \mathbf{E}\left[v^{2}_{n}(t_{i-1})v^{2}_{l}(t_{j-1})  \right] \\
	& =  \sum\limits_{\substack{n,l=1 \\ n \neq l}}^{N} s_n s_l \sum\limits_{i,j=1}^{M} \mathbf{E}\left[v^{2}_{n}(t_{i-1}) \right]
	\mathbf{E}\left[v^{2}_{l}(t_{j-1}) \right] +
	\sum\limits_{n=1}^{N} s^{2}_n \mathbf{E}\left[ \left( \sum\limits_{i=1}^{M}
	v^{2}_{n}(t_{i-1})\right)^{2}\right].
	\end{align*}
	Besides, using Isserlis' theorem, we obtain for any $n=1,..,N$,
	\begin{equation*}
	\mathbf{E}\left[ \left( \sum\limits_{i=1}^{M} v^{2}_{n}(t_{i-1})\right)^{2}\right] =
	\left(\sum\limits_{i=1}^{M} \mathbf{E}[v^{2}_{n}(t_{i-1})]\right)^2 + 2 \sum \limits_{i=1}^{M} \left(\mathbf{E}[v^{2}_{n}(t_{i-1})]\right)^2 + 2 \sum\limits_{\substack{i,j=1 \\ i \neq j}}^{M} \left( \mathbf{E}\left[v_{n}(t_{i-1})v_{n}(t_{j-1})  \right]\right)^{2}.
	\end{equation*}
	Therefore,
	\begin{align}\label{norm-RNM}
	\mathbf{E}\left[ R^2_{N,M} \right] & = \left[ \left(\sum\limits_{n=1}^{N} s_n \sum\limits_{i=1}^{M} \mathbf{E}[v^{2}_{n}(t_{i-1})]\right)^2 +
	2 \sum \limits_{n=1}^{N} s^{2}_n \sum\limits_{i,j=1}^{M} \left( \mathbf{E}[v_{n}(t_{i-1})v_{n}(t_{j-1})]\right)^2 \right] \nonumber \\
	& := A_{N,M,T} + B_{N,M,T}.
	\end{align}
	Since $\left|1-e^{-x}-x\right|/x^2\leq1$ for all $x>0$, by applying Mean
	Value Theorem twice, we have for any $n=1,...,N$
	$$s_n :=  \delta^{2 \alpha}_{n} \left[e^{-\delta^{2\alpha}_n \theta_0 \Delta_{M}} - 1 + \delta^{2\alpha}_n \theta_0 \Delta_{M} \right] \leq \theta_0^2 \delta^{6 \alpha}_{n} \Delta^2_{M}.  $$
	Besides, we also have the following estimate, for any $n=1,...,N$, we have
	$$\mathbf{E}[v^2_n(t)] = \int_{0}^{t} e^{-2 \theta_0 \delta^{2 \alpha}_{n} (t-s)} ds = \frac{1}{2 \theta_0 \delta^{2 \alpha}_{n}}(1-e^{-2 \theta_0  \delta^{2 \alpha}_{n} t }) \leq\frac{1}{2 \theta_0 \delta^{2 \alpha}_{n}}. $$
	Therefore, we can write
	\begin{align*}
	\frac{ 4 \theta_0^2 A_{N,M,T}}{T^2 \left(\sum\limits_{n=1}^{N} \delta^{2 \alpha}_{n}\right)^2} & \leq \frac{4 \theta_0^2}{T^2 \left(\sum\limits_{n=1}^{N} \delta^{2 \alpha}_{n}\right)^2} \left(
	\sum\limits_{n=1}^{N} \theta_0^2 \delta^{6 \alpha}_{n} \Delta^2_{M} \sum\limits_{i=1}^{M} \frac{1}{2 \theta_0 \delta^{2\alpha}_n}\right)^2  \leq \frac{M^2 \Delta^4_{M} \theta_0^4}{ T^2 \left(\sum\limits_{n=1}^{N} \delta^{2 \alpha}_{n}\right)^2} \left(\sum\limits_{n=1}^{N} \delta^{4 \alpha}_{n}\right)^2.
	\end{align*}
	Using the fact that $\delta_n \sim \bar{w}^{1/2} n^{1/d}$ as $n \rightarrow +\infty$, the following estimate holds for $A_{N,M,T}$ for $N,M,T$ large
	\begin{equation*}
	\frac{4 \theta_0^2 A_{N,M,T}}{{T^2 \left(\sum\limits_{n=1}^{N} \delta^{2 \alpha}_{n}\right)^2}} \leq {\theta_0^4} \left(\frac{2 \alpha+d}{\alpha+d}\right)^2
	\bar{\sigma}^{2 \alpha} \Delta^2_M N^{4\alpha/d}.
	\end{equation*}
	For the sequence $B_{N,M,T}$, we have using Cauchy-Schwarz inequality and the estimates above,
	
	\begin{align*}
	\frac{4 \theta_0^2 B_{N,M,T}}{{T^2 \left(\sum\limits_{n=1}^{N} \delta^{2 \alpha}_{n}\right)^2}}  \leq \frac{2}{T^2 \left(\sum\limits_{n=1}^{N} \delta^{2 \alpha}_{n}\right)^2} \sum\limits_{n=1}^{N} s^2_n \sum \limits_{i,j=1}^{M} \frac{1}{ \delta^{4 \alpha}_{n}}  \leq \frac{2 M^2 \Delta^4_M \theta_0^4}{ T^2 \left(\sum\limits_{n=1}^{N} \delta^{2 \alpha}_{n}\right)^2} \sum\limits_{n=1}^{N} \delta^{8 \alpha}_{n}
	\end{align*}
	as previsouly, using the fact that $\delta_n \sim \bar{\sigma}^{1/2} n^{1/d}$ as $n \rightarrow +\infty$, the following estimate holds for $B_{N,M,T}$ for $N,M,T$ large
	\begin{equation*}
	\frac{4 \theta_0^2 B_{N,M,T}}{{T^2 \left(\sum\limits_{n=1}^{N} \delta^{2 \alpha}_{n}\right)^2}} \leq  \left(\frac{2 \alpha+d}{8 \alpha+d}\right)^2 \bar{\sigma}^{2 \alpha} {2 \theta_0^4} \Delta^2_M N^{\frac{4 \alpha}{d}-1}.
	\end{equation*}
	The desired result follows.
\end{proof}
\begin{lemma}\label{norm-SNM}
	Consider the sequence $S_{N,M}$ defined in (\ref{eq3.theta-tilde}) and assume that assumption (\ref{hypothese}) holds true. Then, there exists a constant $C(\theta_0,\alpha,\bar{\sigma},d)$ such that for $N,M,T$ large we have
	\begin{equation*}
	\mathbf{E}\left[\left|\frac{S_{N,M}}{\psi^{\theta_0}_{T,N} } - 1 \right|^2 \right] \leq C(\theta_0,\alpha,\bar{\sigma},d) \max\left(\frac{1}{T N^{\frac{2 \alpha}{d} +1}}, \frac{1}{T^2 N^{\frac{4 \alpha}{d}}}\right).
	\end{equation*}
\end{lemma}
\begin{proof}
	Recall that	$ S_{N,M } = \Delta_{M} \sum\limits_{n=1}^{N} \delta^{4 \alpha}_{n} \sum\limits_{i=1}^{M} v^2_{n}(t_{i-1}).$
	Then, by the product formula (\ref{eq:product}), we obtain
	\begin{align*}
	 S_{N,M }  = \Delta_{M}\left[ \sum\limits_{n=1}^{N} \delta^{4 \alpha}_{n} \sum\limits_{i=1}^{M}
	I^{w_n}_2 \left(e^{-2 \theta_0 \delta^{2 \alpha}_n t_{i-1}}  e^{\theta_0  \delta^{2 \alpha}_{n} (u+v)} \mathbf{1}(u,v)_{[0,t_{i-1}]^2} \right) 	+ \sum\limits_{n=1}^{N} \delta^{4 \alpha}_{n} \sum\limits_{i=1}^{M} \frac{1}{2 \theta_0 \delta^{2 \alpha}_{n}}
	(1- e^{-2 \theta_0  \delta^{2 \alpha}_{n} t_{i-1} })  \right].
	\end{align*}
	Therefore, we can write
	\begin{equation}\label{Decomposition-SNM}
	\frac{S_{N,M}}{\psi^{\theta_0}_{T,N}} - 1 := C_{N,M,T} +
	\left(D_{N,M,T}-1 \right).
	\end{equation}
	Thus, using the inequality $(a+b)^2 \leq 2 a^2 + 2 b^2$, we obtain
	\begin{equation*}
	\mathbf{E}\left[\left|\frac{S_{N,M}}{\psi^{\theta_0}_{T,N}} - 1 \right|^2 \right] \leq 2 \mathbf{E}[C^2_{N,M,T}] + 2 |D_{N,M,T}-1|^2.
	\end{equation*}
	For the seek of simplicity, let us denote $\varphi^{\theta_0}_n(t) : = e^{-2 \theta_0 \delta^{2 \alpha}_n t }  e^{\theta_0  \delta^{2 \alpha}_{n}(u+v)} \mathbf{1}(u,v)_{[0,t]^2} $, we have
	\begin{align*}
	&\mathbf{E}\left[C^2_{N,M,T}\right]\\
	& = \frac{4 \theta_0^2 \Delta^2_{M}}{T^2 (\sum\limits_{n=1}^{N} \delta^{2 \alpha}_{n})^2}
	\left[\sum\limits_{n=1}^{N} \delta^{8 \alpha}_n \sum \limits_{i=1}^{M} \mathbf{E}\left[I^{w_n}_2(\varphi^{\theta_0}_n(t_{i-1}))^2 \right] + \sum\limits_{n=1}^{N} \delta^{8 \alpha}_n \sum \limits_{i,j=1}^{M}
	\mathbf{E}\left[I^{w_n}_2(\varphi^{\theta_0}_n(t_{i-1}))I^{w_n}_2(\varphi^{\theta_0}_n(t_{j-1})) \right] \right] \\
	& := C_{N,M,T,1} + C_{N,M,T,2}.
	\end{align*}
	For the sequence $C_{N,M,T,1}$, using the isometry property (\ref{Isometry}) the following estimate holds 
	
	\begin{align*}
	C_{N,M,T,1} &= \frac{2 \Delta^2_{M}}{T^2 (\sum\limits_{n=1}^{N} \delta^{2 \alpha}_{n})^2} \sum\limits_{n=1}^{N} \delta^{8 \alpha}_n \sum\limits_{i=1}^{M} \frac{1}{ \delta^{4 \alpha}_n}(1- e^{-2 \theta_0 \delta^{2 \alpha t_{i-1}}_n})^2  \leq \frac{2 M\Delta^2_{M}}{T^2 (\sum\limits_{n=1}^{N} \delta^{2 \alpha}_{n})^2} \sum\limits_{n=1}^{N} \delta^{4 \alpha}_n \\
	& \leq c(\theta_0,\alpha, d,\bar{\sigma}) \frac{M \Delta^2_M}{T^2} \frac{N^{\frac{4 \alpha}{d} +1}}{ N^{\frac{4 \alpha}{d} +2}} \sim c(\theta_0,\alpha, d,\bar{\sigma}) \frac{1}{MN}.
	\end{align*}
	For the term $C_{N,M,T,2}$, we have
	\begin{align*}
	C_{N,M,T,2} & = \frac{4 \theta_0^2 \Delta^2_M}{T^2 (\sum\limits_{n=1}^{N} \delta^{2 \alpha}_{n})^2} \sum\limits_{n=1}^{N}  \delta^{8 \alpha}_{n} \sum \limits_{i,j=1}^{M} <\varphi^{\theta_0}_n(t_{i-1}), \varphi^{\theta_0}_n(t_{j-1})>_{L^{2}([0,T]^2)} \\
	& = \frac{8 \theta_0^2 \Delta^2_M}{T^2 (\sum\limits_{n=1}^{N} \delta^{2 \alpha}_{n})^2} \sum\limits_{n=1}^{N}  \delta^{8 \alpha}_{n}\sum \limits_{i=1}^{M-1} \sum\limits_{j=i+1}^{M} e^{-2 \theta_0 \delta^{2\alpha}_n(t_{j-1}-t_{i-1})} \left(\int_{0}^{t_{i-1}} e^{-2 \theta_0 \delta^{2 \alpha}_{n} u} du\right)^2   \\
	& = \frac{2 \Delta^2_M}{T^2 (\sum\limits_{n=1}^{N} \delta^{2 \alpha}_{n})^2} \sum\limits_{n=1}^{N} \delta^{4 \alpha}_{n} \sum \limits_{i=1}^{M-1} \sum\limits_{j=i+1}^{M}  e^{-2 \theta_0 \delta^{2\alpha}_n \Delta_M(j-i)} (1- e^{-2 \theta_0 \delta^{2 \theta_0}_n t_{i-1}})^{2} \\
	& \leq  \frac{2 \Delta^2_M}{ T^2 (\sum\limits_{n=1}^{N} \delta^{2 \alpha}_{n})^2} \sum\limits_{n=1}^{N} \delta^{4 \alpha}_{n} \sum \limits_{i=1}^{M-1} \sum\limits_{r=1}^{M-i}  e^{-2 \theta_0 \delta^{2\alpha}_n \Delta_M r} \\
	& \leq \frac{2 M \Delta^2_M}{ T^2 (\sum\limits_{n=1}^{N} \delta^{2 \alpha}_{n})^2}
	\sum\limits_{n=1}^{N} \delta^{4 \alpha}_{n} \left(\frac{1-e^{-2 \theta_0 \delta^{2 \alpha}_{n}T}}{1- e^{-2 \theta_0 \delta^{2 \alpha}_{n} \Delta_M}}\right) \\
	& \leq \frac{M \Delta_M}{\theta_0 T^2  (\sum\limits_{n=1}^{N} \delta^{2 \alpha}_{n})^2}  \sum\limits_{n=1}^{N} \delta^{2 \alpha}_{n} \frac{\Delta_M \delta^{2 \alpha}_{n} 2 \theta_0}{(1- e^{-2 \theta_0 \delta^{2 \alpha}_{n} \Delta_M})} \sim c(\theta_0,\alpha,d, \bar{w}) \frac{1}{T N^{\frac{2\alpha}{d}+1}},
	\end{align*}
	where we used the fact that under the assumption $\Delta_M N^{\frac{2 \alpha}{d}} \rightarrow 0$, as $N,M \rightarrow +\infty$, we have
	\begin{equation*}
	\delta^{2 \alpha}_{n}\frac{\Delta_M \delta^{2 \alpha}_{n} 2 \theta_0}{(1-e^{-2 \theta_0 \delta^{2 \alpha}_{n} \Delta_M})} \sim \delta^{2 \alpha}_{n}, \	\text{as} \ n,M \rightarrow +\infty.
	\end{equation*}
 Moreover, under the assumption
	$\Delta_M N^{\frac{2 \alpha}{d}} \rightarrow 0$, as $N,M,T
	\rightarrow +\infty$, we get $ \frac{1}{MN} = o\left(\frac{1}{T
		N^{\frac{2\alpha}{d}+1}}\right).$ \\
	For the left hand term
	$D_{N,M,T}$ in (\ref{Decomposition-SNM}), using similar arguments as
	previously, we have
	\begin{align*}
	D_{N,M,T} -1 & = \frac{\Delta_M}{T \sum\limits_{n=1}^{N} \delta^{2\alpha}_n} \sum\limits_{n=1}^{N} \delta^{2\alpha}_n
	\sum\limits_{i=1}^{M} (1-e^{-2 \theta_0 \delta^{2\alpha}_n t_{i-1}}) -1 \\
	& = \frac{1}{T \sum\limits_{n=1}^{N} \delta^{2\alpha}_n} \left[ \Delta_M \sum \limits_{n=1}^{N} \delta^{2\alpha}_n
	\sum\limits_{n=1}^{M} ((1-e^{-2 \theta_0 \delta^{2\alpha}_n t_{i-1}})-1 )\right] \\
	& = - \frac{\Delta_M}{T \sum\limits_{n=1}^{N} \delta^{2\alpha}_n} \sum\limits_{n=1}^{N} \delta^{2\alpha}_n \left(\frac{1-e^{-2 \theta_0 \delta^{2 \alpha}_{n}T}}{1- e^{-2 \theta_0 \delta^{2 \alpha}_{n} \Delta_M}}\right) .
	\end{align*}
	Hence
	\begin{equation*}
	|D_{N,M,T}-1|^2 \leq c(\theta_0,\alpha,d,\bar{\sigma}) \frac{1}{T^2 N^{4\alpha/d}}.
	\end{equation*}
\end{proof}
\begin{lemma}\label{estim-lambda}
	For any $N,M,T$, the following estimate holds for the sequence $\Lambda_{N,
		M}$,
	\begin{eqnarray}
	\left|\mathbf{E}\left[\left(\frac{\Lambda_{N,
			M}}{\sqrt{T\sum_{n=1}^{N}\delta_n^{2
				\alpha}}}\right)^2\right]-\frac{1}{2\theta_0}\right| &\leq&\Delta_{M}
	\frac{\sum_{n=1}^{N}\delta_n^{4 \alpha}}{\sum_{n=1}^{N}\delta_n^{2
			\alpha}} +\frac{N}{(2\theta_0)^2T\sum_{n=1}^{N}\delta_n^{2
			\alpha}}.\label{estimate-Lambda}
	\end{eqnarray}
	Moreover, if $N,M,T$ satisfy $\Delta_M N^{\frac{2\alpha}{d}} \rightarrow 0$, as $N,M,T \rightarrow +\infty$, then
	\begin{equation}\label{lim-norm-lambda}
	\mathbf{E}\left[\left(\frac{\Lambda_{N,
			M}}{\sqrt{T\sum_{n=1}^{N}\delta_n^{2
				\alpha}}}\right)^2\right] \longrightarrow \frac{1}{2 \theta_0},
	\end{equation}
	as $N,M,T \rightarrow +\infty.$
\end{lemma}
\begin{proof}
	Since $\eta_n,\ n=1,\ldots,N$ are Gaussian independent processes
	with independent increments, we can write
	\begin{equation*}
	\mathbf{E}\left[\left(\frac{\Lambda_{N,
			M}}{\sqrt{T}}\right)^2\right]=\frac{1}{T}\sum_{n=1}^N \mathbf{E}\left(B_{n,M}^2\right).
	\end{equation*}
	Next, since $\eta_{n}(t_{i-1})$ and
	$\eta_{n}(t_{i})-\eta_{n}(t_{i-1})$ are independent, we have
	\begin{align*}
	&\frac{1}{T}\mathbf{E}\left(B_{n,M}^2\right) \\
	&=\frac{\delta_n^{4
			\alpha}}{T}\sum_{i,j=1}^{M} e^{-\theta_0 \delta_n^{2
			\alpha}(t_{i}+t_{i-1}+t_{j}+t_{j-1})}
	\mathbf{E}\left[\eta_{n}(t_{i-1})\left(\eta_{n}(t_{i})-\eta_{n}(t_{i-1})\right)\eta_{n}(t_{j-1})\left(\eta_{n}(t_{j})-\eta_{n}(t_{j-1})\right)\right]\\
	&=\frac{\delta_n^{4 \alpha}}{T}\sum_{i=1}^{M} e^{-2\theta_0
		\delta_n^{2 \alpha}(t_{i}+t_{i-1})}
	\mathbf{E}\left[\eta_{n}(t_{i-1})^2\left(\eta_{n}(t_{i})-\eta_{n}(t_{i-1})\right)^2\right]
	\\
	&=\frac{\delta_n^{4 \alpha}}{T}\sum_{i=1}^{M} e^{-2\theta_0
		\delta_n^{2 \alpha}(t_{i}+t_{i-1})}
	\mathbf{E}\left[\eta_{n}(t_{i-1})^2\right]
	\mathbf{E}\left[\left(\eta_{n}(t_{i})-\eta_{n}(t_{i-1})\right)^2\right]\\
	&=\frac{\delta_n^{4 \alpha}}{T}\sum_{i=1}^{M} e^{-2\theta_0
		\delta_n^{2 \alpha}(t_{i}+t_{i-1})} \left(\frac{e^{2\theta_0
			\delta_n^{2 \alpha} t_{i-1}}-1}{2\theta_0 \delta_n^{2 \alpha}}\right)
	\left(\frac{e^{2\theta_0 \delta_n^{2 \alpha}  t_{i}}-e^{2\theta_0 \delta_n^{2 \alpha} t_{i-1}}}{2\theta_0 \delta_n^{2 \alpha}}\right)\\
	&=\frac{\left(1-e^{-2\theta_0 \delta_n^{2 \alpha}
			\Delta_{M}}\right)}{(2\theta_0)^2\Delta_{M}} \frac{1}{M}\sum_{i=1}^{M}
	\left(1-e^{-2\theta_0 \delta_n^{2 \alpha} t_{i-1}}\right)\\
	&=\frac{\left(1-e^{-2\theta_0 \delta_n^{2 \alpha}
			\Delta_{M}}\right)}{(2\theta_0)^2\Delta_{M}}
	-\frac{\left(1-e^{-2\theta_0 \delta_n^{2 \alpha}
			\Delta_{M}}\right)}{(2\theta_0)^2\Delta_{M}}\left(\frac{1-e^{-2\theta_0
			\delta_n^{2 \alpha} T}}{M(1-e^{-2\theta_0 \delta_n^{2 \alpha}
			\Delta_M})}\right)\\
	&=\frac{\left(1-e^{-2\theta_0 \delta_n^{2 \alpha}
			\Delta_{M}}\right)}{(2\theta_0)^2\Delta_{M}}
	-\frac{\left(1-e^{-2\theta_0 \delta_n^{2 \alpha}
			T}\right)}{(2\theta_0)^2T}.
	\end{align*}
	This implies,
	\begin{eqnarray*}
		\sum_{n=1}^N\left[\frac{1}{T}\mathbf{E}\left(B_{n,M}^2\right)-\frac{\delta_n^{2
				\alpha}}{2\theta_0}\right]&=&\sum_{n=1}^N\left[\frac{\left(1-e^{-2\theta_0
				\delta_n^{2 \alpha} \Delta_{M}}-2\theta_0 \delta_n^{2 \alpha}
			\Delta_{M}\right)}{(2\theta_0)^2\Delta_{M}} -\frac{\left(1-e^{-2\theta_0
				\delta_n^{2 \alpha} T}\right)}{(2\theta_0)^2T}\right]\\
		&\leq&\sum_{n=1}^N\left[\frac{\left|1-e^{-2\theta_0 \delta_n^{2
					\alpha} \Delta_{M}}-2\theta_0 \delta_n^{2 \alpha}
			\Delta_{M}\right|}{(2\theta_0)^2\Delta_{M}}
		+\frac{1}{(2\theta_0)^2T}\right]\\
		&\leq&\sum_{n=1}^N\left[ \delta_n^{4 \alpha}
		\Delta_{M}+\frac{1}{(2\theta_0)^2T}\right],
	\end{eqnarray*}
	where the last inequality follows from the fact that
	$\left|1-e^{-x}-x\right|/x^2\leq1$ for all $x>0$, by applying Mean
	Value Theorem twice.\\
	Therefore,
	\begin{eqnarray}
	\left|\frac{1}{T\sum_{n=1}^{N}\delta_n^{2
			\alpha}}\mathbf{E}\left(B_{n,M}^2\right)-\frac{1}{2\theta_0}\right|
	&\leq&\Delta_{M} \frac{\sum_{n=1}^{N}\delta_n^{4
			\alpha}}{\sum_{n=1}^{N}\delta_n^{2 \alpha}}
	+\frac{N}{(2\theta_0)^2T\sum_{n=1}^{N}\delta_n^{2
			\alpha}},\label{estimate-sum-B}
	\end{eqnarray}
	which proves \eqref{estimate-Lambda}. The limit (\ref{lim-norm-lambda}), is a direct consequence of (\ref{estimate-Lambda}) and the fact that when $N,M,T \rightarrow +\infty$, the following hold true
	\begin{itemize}
		\item $$ \Delta_{M} \frac{\sum_{n=1}^{N}\delta_n^{4
				\alpha}}{\sum_{n=1}^{N}\delta_n^{2 \alpha}} \sim \frac{T}{M} \bar{\sigma}^{\alpha} N^{\frac{2 \alpha}{d}}\left(\frac{2\alpha+d}{4 \alpha+d}\right). $$
		\item $$\frac{N}{(2\theta_0)^2 T \sum_{n=1}^{N} \delta^{2\alpha}_n} \sim \frac{(2\alpha+d)}{T N^{\frac{2\alpha}{d}} d (2\theta_0)^2 \bar{\sigma}^{\alpha}}. $$
	\end{itemize}
	The desired result follows.
\end{proof}
\end{proof}
\subsection{Asymptotic normality of the estimator $\widetilde{\theta}_{T,N,M}$}

\begin{lemma}\label{cum-lambda}
	We have
	\begin{eqnarray}
	\kappa_{3}\left(\frac{\Lambda_{N,
			M}}{\sqrt{T\sum_{n=1}^{N}\delta_n^{2 \alpha}}}\right) =0 
	  \ \text{and} \ \kappa_{4}\left(\frac{\Lambda_{N,
			M}}{\sqrt{T\sum_{n=1}^{N}\delta_n^{2 \alpha}}}\right)
	&\leq&\frac{1}{M\left(2\theta_0\right)^2}\frac{\sum_{n=1}^{N}\delta_n^{4
			\alpha}}{\left(\sum_{n=1}^{N}\delta_n^{2
			\alpha}\right)^2}\label{third-cumulant-Lambda}
	\end{eqnarray}
\end{lemma}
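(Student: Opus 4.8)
The plan is to exploit the two structural facts already used for the variance in Lemma~\ref{estim-lambda}: the modes are independent across $n$, so $\Lambda_{N,M}=\sum_{n=1}^N B_{n,M}$ is a sum of \emph{independent} centered summands (the $B_{n,M}$ from \eqref{Lambda-zeta}), and cumulants are additive over independent summands and homogeneous of degree $p$ under scaling, $\kappa_p(cX)=c^p\kappa_p(X)$. Writing $\beta_N:=T\sum_{n=1}^N\delta_n^{2\alpha}$, it therefore suffices to prove $\sum_{n=1}^N\kappa_3(B_{n,M})=0$ and to bound $\sum_{n=1}^N\kappa_4(B_{n,M})$, after which division by $\beta_N^{3/2}$ and $\beta_N^{2}$ gives the two statements. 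First I would record that each $B_{n,M}$ is a second chaos element of $w_n$: using $v_n(t)=e^{-\theta_0\delta_n^{2\alpha}t}\eta_n(t)$ and the fact that the increments $\Delta_i\eta_n:=\eta_n(t_i)-\eta_n(t_{i-1})$ over disjoint intervals are independent, one has, with $a_i:=e^{-\theta_0\delta_n^{2\alpha}(t_i+t_{i-1})}$, $B_{n,M}=\delta_n^{2\alpha}\sum_{j<i}a_i\,\Delta_j\eta_n\,\Delta_i\eta_n=I^{w_n}_2(h_n)$ for an explicit kernel $h_n$ supported on the strictly off-diagonal (non-anticipating) region. This representation lets me apply the cumulant formulas \eqref{3rd-cumulant} and \eqref{4th-cumulant} mode by mode.

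For the fourth cumulant I would use $\kappa_4(B_{n,M})\le 48\,\|h_n\otimes_1 h_n\|_{\mathcal H^{\otimes2}}^2$ from \eqref{4th-cumulant}. Expanding the contraction turns $\|h_n\otimes_1 h_n\|^2$ into a double sum over the grid of exactly the type encountered in the estimate of $C_{N,M,T,2}$ in Lemma~\ref{norm-SNM}: after carrying out the inner integrals against $e^{-\theta_0\delta_n^{2\alpha}(\cdot)}$ one is left with nested geometric sums of the form $\sum_{i}\sum_{r}e^{-2\theta_0\delta_n^{2\alpha}\Delta_M r}(1-e^{-2\theta_0\delta_n^{2\alpha}t_{i-1}})^2$, whose summation produces the factor $(1-e^{-2\theta_0\delta_n^{2\alpha}\Delta_M})^{-1}\sim(2\theta_0\delta_n^{2\alpha}\Delta_M)^{-1}$ under the standing assumption \eqref{hypothese}. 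Collecting the powers of $\delta_n$ and $\Delta_M=T/M$, summing over $n$, and dividing by $\beta_N^2$ yields the claimed bound; the single power of $1/M$ is precisely the signature of the surviving $\Delta_M/(1-e^{-2\theta_0\delta_n^{2\alpha}\Delta_M})$ telescoping. I expect this part to be routine, if bookkeeping-heavy.

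For the third cumulant, \eqref{3rd-cumulant} reduces the claim to $\sum_{n}\langle h_n,\,h_n\otimes_1 h_n\rangle_{\mathcal H^{\otimes2}}=0$, i.e. $\sum_n\mathbf E[B_{n,M}^3]=0$. The natural route is to evaluate $\mathbf E[B_{n,M}^3]$ directly by Isserlis' theorem, as in Lemma~\ref{estim-lambda}, writing $B_{n,M}/\delta_n^{2\alpha}=\sum_i a_i\,\eta_n(t_{i-1})\Delta_i\eta_n$: the expectation of any product of three such factors vanishes whenever the largest time index appears only once, by the martingale-difference (independent increment) structure, so only triples in which two factors share the maximal index survive. \textbf{This is the step I expect to be the main obstacle}, and it is where I would be most cautious: after reduction the surviving contribution is $\sum_{j<i}a_i^2a_j\,\sigma_i^2\sigma_j^2\,\mathrm{Var}(\eta_n(t_{j-1}))$, and one must verify that it actually cancels. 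Since the analogous continuous-time numerator $E_{T,N}$ has a \emph{nonzero} (albeit $O(\psi^{-1/2})$) third cumulant, any vanishing here can only come from the strictly non-anticipating, diagonal-free form of $h_n$; I would therefore check the Wick expansion of $\mathbf E[\eta_n(t_{i-1})^2\,\eta_n(t_{j-1})\,\Delta_j\eta_n]$ for $j<i$ term by term to confirm that the relevant contraction truly vanishes rather than merely being small, as this exact cancellation is the crux on which the stated $\kappa_3=0$ rests.
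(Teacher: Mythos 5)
Your reduction to $\sum_n\kappa_3(B_{n,M})$ and $\sum_n\kappa_4(B_{n,M})$ by independence across modes and $p$-homogeneity is exactly the paper's first step. For $\kappa_4$ the paper then estimates $\mathbf{E}(B_{n,M}^4)-3[\mathbf{E}(B_{n,M}^2)]^2$ directly by Gaussian moment computations (citing \cite{EAA2023}) rather than through $\|h_n\otimes_1 h_n\|_{\mathcal H^{\otimes2}}^2$; your contraction route leads to the same nested geometric sums and is workable, provided you replace the asymptotic $(1-e^{-x})^{-1}\sim x^{-1}$ by the non-asymptotic bound $1-e^{-x}\leq x$, since the lemma is stated for all $N,M,T$ and not under \eqref{hypothese}. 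That part is fine.

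The genuine gap is the one you yourself flagged: you never establish $\kappa_3=0$, and the verification you propose would in fact show that the cancellation does \emph{not} occur. Write $B_{n,M}=\sum_i m_i$ with $m_i=\delta_n^{2\alpha}a_i\,\eta_n(t_{i-1})\Delta_i\eta_n$ and $\sigma_{n,i}^2:=\mathbf{E}[(\Delta_i\eta_n)^2]$. The independence of $\eta_n(t_{i-1})$ and $\Delta_i\eta_n$ --- which is the paper's entire justification for $\mathbf{E}(B_{n,M}^3)=0$ --- only annihilates the diagonal terms $\mathbf{E}[m_i^3]$ and the terms in which the maximal time index occurs exactly once. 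The mixed terms with indices $\{i,i,k\}$, $i>k$, survive: by Wick, $\mathbf{E}\big[\eta_n(t_{i-1})^2\,\eta_n(t_{k-1})\,\Delta_k\eta_n\big]=2\,\mathrm{Var}(\eta_n(t_{k-1}))\,\sigma_{n,k}^2$, because although the pairing of $\eta_n(t_{k-1})$ with $\Delta_k\eta_n$ vanishes, the two cross pairings do not ($\eta_n(t_{i-1})$ contains the increment $\Delta_k\eta_n$ when $k<i$). Hence $\mathbf{E}(B_{n,M}^3)=6\,\delta_n^{6\alpha}\sum_{i>k}a_i^2a_k\,\sigma_{n,i}^2\sigma_{n,k}^2\,\mathrm{Var}(\eta_n(t_{k-1}))>0$ for $M\geq3$. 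A toy check with standard Brownian motion on the grid $0,1,2,3$ and all weights equal to $1$ gives $B=XY+(X+Y)Z$ with $X,Y,Z$ i.i.d.\ standard normal and $\mathbf{E}[B^3]=3\,\mathbf{E}[XY(X+Y)^2]=6\neq0$. So the first assertion of Lemma \ref{cum-lambda} cannot be proved as stated --- neither by your route nor by the paper's one-line argument; what one can prove is an upper bound on $|\kappa_3|$ of order $\big(T\sum_n\delta_n^{2\alpha}\big)^{-1/2}$, analogous to the continuous-time estimate $3/(\theta_0\sqrt{\psi^{\theta_0}_{T,N}})$, and one must then track how this affects the rate claimed in \eqref{bound-TCL-lambda}. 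Your instinct to treat this step as the crux, rather than accept the independence argument at face value, was correct.
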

\begin{proof} Using $\mathbf{E}[\Lambda_{N,M}]=0$ and the fact that   $\eta_n,\ n=1,\ldots,N$ are Gaussian independent
	processes with independent increments, we can write
	\begin{eqnarray*}\kappa_{3}\left(\frac{1}{\sqrt{T\sum_{n=1}^{N}\delta_n^{2 \alpha}}}
		\Lambda_{N,M}\right)= \mathbf{E}\left[\left(\frac{1}{\sqrt{T\sum_{n=1}^{N}\delta_n^{2
					\alpha}}}
		\Lambda_{N,M}\right)^3\right]=\frac{1}{\left(T\sum_{n=1}^{N}\delta_n^{2
				\alpha}\right)^{3/2}}\sum_{n=1}^N \mathbf{E}\left(B_{n,M}^3\right).
	\end{eqnarray*}
	Moreover, since  $\eta_n({t_{i-1}})$ and
	$\eta_n({t_{i}})-\eta_n({t_{i-1}})$ are independent, then
	$\mathbf{E}\left(B_{n,M}^3\right)=0$, which implies
	\eqref{third-cumulant-Lambda}.\\
	Since  $B_{n,M},\ n=1,\ldots,N$ are independent and $\mathbf{E}\left(B_{n,M}\right)=0$,
	\begin{eqnarray*} \mathbf{E}\left(\Lambda_{N, M}^4\right)&=&3\sum_{n\neq j=1}^N \mathbf{E}\left(B_{n,M}^2\right) \mathbf{E}\left(B_{j,M}^2\right)+\sum_{n=1}^N
		\mathbf{E}\left(B_{n,M}^4\right)
	\end{eqnarray*}
	and	$\mathbf{E}\left[\Lambda_{N,
			M}^2\right]=\sum_{n=1}^N \mathbf{E}\left(B_{n,M}^2\right).$ Hence
	\begin{eqnarray*}\kappa_{4}\left(\Lambda_{N, M}\right)&=& \mathbf{E}\left[\Lambda_{N, M}^4\right]-3\left[\mathbf{E}\left(\Lambda_{N,
			M}^2\right)\right]^2\\
		&=&\sum_{k=1}^N
		\mathbf{E}\left(B_{n,M}^4\right)-3\sum_{n=1}^N\left[\mathbf{E}\left(B_{n,M}^2\right)\right]^2\\
		&=&\sum_{n=1}^N\left(
		\mathbf{E}\left(B_{n,M}^4\right)-3\left[\mathbf{E}\left(B_{n,M}^2\right)\right]^2\right).
	\end{eqnarray*}
	On the other hand, similar arguments as in \cite{EAA2023} imply
	\begin{eqnarray*}
		\left|\mathbf{E}\left(B_{n,M}^4\right)-3\left[\mathbf{E}\left(B_{n,M}^2\right)\right]^2\right|
		&\leq&\left(\delta_n^{2 \alpha}\right)^4M\frac{\left(1-e^{-2\theta_0
				\delta_n^{2 \alpha}
				\Delta_{M}}\right)^2}{\left(2\theta_0 \delta_n^{2 \alpha}\right)^4} \leq \left(\delta_n^{2 \alpha}\right)^2M\frac{
			\Delta_{M}^2}{\left(2\theta_0 \right)^2},
	\end{eqnarray*}
	where the last inequality follows from the fact that
	$\left|1-e^{-x}\right|/x\leq1$ for all $x>0$, by   Mean Value
	Theorem.\\
	Therefore,
	\begin{eqnarray*}
		\kappa_{4}\left(\frac{\Lambda_{N,
				M}}{\sqrt{T\sum_{n=1}^{N}\delta_n^{2 \alpha}}}\right)
		&\leq&\frac{1}{M\left(2\theta_0\right)^2}\frac{\sum_{n=1}^{N}\delta_n^{4
				\alpha}}{\left(\sum_{n=1}^{N}\delta_n^{2 \alpha}\right)^2},
	\end{eqnarray*}
	which implies \eqref{third-cumulant-Lambda}
\end{proof}
\begin{theorem}\label{TCL-lambda}
	There exists a constant $C(\alpha, \theta_0, d)$ such that for $N,M,T$ large, we have
	\begin{equation}\label{bound-TCL-lambda}
	d_W\left(\frac{\Lambda_{N,
			M}}{\sqrt{\psi^{\theta_0}_{T,N}}}, Z\right) \leq C(\alpha, \theta_0, d) \times \frac{1}{MN}.
	\end{equation}
	where $\psi^{\theta_0}_{T,N} = \frac{T}{2 \theta_0} \sum\limits_{n=1}^{N}\delta_n^{2 \alpha}$ and $Z \sim \mathcal{N}(0,1)$. Consequently, as $N,M,T \rightarrow +\infty.$
	\begin{equation*}
	\frac{\Lambda_{N,M}}{\sqrt{T} N^{\frac{\alpha}{d}+ \frac{1}{2}}} \overset{law}{\longrightarrow}
	\mathcal{N}\left(0, \frac{\bar{\sigma}^{\alpha}d}{\theta_0(4 \alpha+2d)}\right).
	\end{equation*}
\end{theorem}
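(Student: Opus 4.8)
The plan is to recognise $F_{N,M}:=\Lambda_{N,M}/\sqrt{\psi^{\theta_0}_{T,N}}$ as a normalised element of the second Wiener chaos and then to invoke the optimal fourth moment theorem \eqref{optimal berry esseen} in its $d_W$ version. The enabling observation is that each summand $B_{n,M}$ in \eqref{Lambda-zeta} is assembled from products $\eta_n(t_{i-1})\left[\eta_n(t_i)-\eta_n(t_{i-1})\right]$ of increments of $\eta_n$ over the \emph{disjoint} intervals $[0,t_{i-1}]$ and $[t_{i-1},t_i]$; by the product formula \eqref{eq:product} the order-one contraction then carries a vanishing inner product, so $B_{n,M}\in\mathcal{H}_2$, and since the $w_n$ are independent, $\Lambda_{N,M}=\sum_{n=1}^N B_{n,M}\in\mathcal{H}_2$ as well. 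Hence $F_{N,M}\in\mathcal{H}_2$ and the fourth-moment machinery applies verbatim once the variance is normalised.

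Since \eqref{optimal berry esseen} requires unit variance, I would set $\sigma_{N,M}^2:=\mathbf{E}[F_{N,M}^2]$ and $\bar F_{N,M}:=F_{N,M}/\sigma_{N,M}$, and split
$$d_W(F_{N,M},Z)\leq d_W(F_{N,M},\bar F_{N,M})+d_W(\bar F_{N,M},Z).$$
For the second term I use that cumulants are homogeneous: Lemma \ref{cum-lambda} gives $\kappa_3(\bar F_{N,M})=0$, while the scaling $F_{N,M}=\sqrt{2\theta_0}\,\Lambda_{N,M}/\sqrt{T\sum_{n=1}^N\delta_n^{2\alpha}}$ together with Lemma \ref{cum-lambda} yields (the factor $(2\theta_0)^2$ cancelling)
$$\kappa_4(F_{N,M})\leq\frac{1}{M}\frac{\sum_{n=1}^N\delta_n^{4\alpha}}{\left(\sum_{n=1}^N\delta_n^{2\alpha}\right)^2}.$$
Invoking $\delta_n\sim\bar\sigma^{1/2}n^{1/d}$ from \eqref{vk}, one has $\sum_{n=1}^N\delta_n^{4\alpha}\sim c\,N^{4\alpha/d+1}$ and $\big(\sum_{n=1}^N\delta_n^{2\alpha}\big)^2\sim c'\,N^{4\alpha/d+2}$, so the ratio is of order $1/N$ and $\kappa_4(F_{N,M})\leq C/(MN)$. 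Because $\sigma_{N,M}\to1$ by Lemma \ref{estim-lambda}, the same bound holds for $\kappa_4(\bar F_{N,M})$, and since $\kappa_3=0$ the optimal theorem delivers $d_W(\bar F_{N,M},Z)\leq C\,\kappa_4(\bar F_{N,M})\leq C/(MN)$.

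For the normalisation term I would bound $d_W(F_{N,M},\bar F_{N,M})\leq\mathbf{E}\big|F_{N,M}-\bar F_{N,M}\big|=\big|1-\sigma_{N,M}^{-1}\big|\,\mathbf{E}|F_{N,M}|\leq|\sigma_{N,M}-1|$, using $\mathbf{E}|F_{N,M}|\leq\sigma_{N,M}$, and then control $|\sigma_{N,M}^2-1|$ directly through Lemma \ref{estim-lambda}. Adding the two contributions gives the claimed $d_W(F_{N,M},Z)\leq C/(MN)$, with the fourth-cumulant estimate as the operative term; the final CLT then follows since the bound tends to $0$, and the limiting variance $\bar\sigma^{\alpha}d/(\theta_0(4\alpha+2d))$ is read off from $\sqrt{\psi^{\theta_0}_{T,N}}\sim\sqrt{T}\,N^{\alpha/d+1/2}\sqrt{\bar\sigma^{\alpha}/(2\theta_0(2\alpha/d+1))}$ via \eqref{ENT}. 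The step I expect to be the main obstacle is precisely the variance-normalisation discrepancy: one must verify that the two terms $\Delta_M\,\sum\delta_n^{4\alpha}/\sum\delta_n^{2\alpha}$ and $N/(T\sum\delta_n^{2\alpha})$ bounding $|\sigma_{N,M}-1|$ in Lemma \ref{estim-lambda} are absorbed into the rate $1/(MN)$ produced by $\kappa_4$, rather than dominating it; by contrast, the chaos-membership observation is what legitimises the whole fourth-moment approach and is essentially free.
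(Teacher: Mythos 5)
Your route is the paper's route: view $\Lambda_{N,M}/\sqrt{\psi^{\theta_0}_{T,N}}$ as a second-chaos variable, feed the cumulant bounds of Lemma \ref{cum-lambda} into the optimal fourth moment theorem \eqref{optimal berry esseen} in its $d_W$ form, and convert $\frac{1}{M}\sum_{n\leq N}\delta_n^{4\alpha}/\bigl(\sum_{n\leq N}\delta_n^{2\alpha}\bigr)^2$ into $1/(MN)$ via $\delta_n\sim\bar\sigma^{1/2}n^{1/d}$. Your chaos-membership argument (disjoint supports of $\eta_n(t_{i-1})$ and $\eta_n(t_i)-\eta_n(t_{i-1})$, independence across $n$) is correct and matches the representation of $\Lambda_{N,M}$ that the paper records just after the theorem, and the cancellation of $(2\theta_0)^2$ in the rescaled fourth cumulant is right. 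You are in fact \emph{more} careful than the paper, whose proof is a one-line application of \eqref{optimal berry esseen} to $\Lambda_{N,M}/\sqrt{\psi^{\theta_0}_{T,N}}$ that never addresses the unit-variance hypothesis.

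That extra care, however, exposes a step that does not close, and you explicitly leave it open: the term $d_W(F_{N,M},\bar F_{N,M})\leq|\sigma_{N,M}-1|$ is \emph{not} absorbed into $1/(MN)$. By Lemma \ref{estim-lambda} (after multiplying by $2\theta_0$ to pass to $\psi^{\theta_0}_{T,N}$), $|\sigma_{N,M}^2-1|$ is controlled by $\Delta_M\sum_n\delta_n^{4\alpha}/\sum_n\delta_n^{2\alpha}\sim c\,\Delta_M N^{2\alpha/d}$ plus $N/\bigl(T\sum_n\delta_n^{2\alpha}\bigr)\sim c'/(TN^{2\alpha/d})$, and
\begin{equation*}
\frac{\Delta_M N^{2\alpha/d}}{1/(MN)}=T\,N^{\frac{2\alpha}{d}+1}\longrightarrow+\infty,
\end{equation*}
so the first of these dominates $1/(MN)$ even under the standing assumption \eqref{hypothese} (and the second need not be $O(1/(MN))$ either). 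As written, your argument therefore yields $d_W\bigl(\Lambda_{N,M}/\sqrt{\psi^{\theta_0}_{T,N}},Z\bigr)\leq C\max\bigl(1/(MN),\,\Delta_M N^{2\alpha/d},\,1/(TN^{2\alpha/d})\bigr)$ rather than the stated $C/(MN)$; this still gives the qualitative CLT (all three terms vanish under \eqref{hypothese}), but not the announced rate. This is a defect you have correctly located in the statement/proof rather than one you introduced — the paper's own proof silently applies \eqref{optimal berry esseen} to a sequence whose variance is not $1$ — but your proposal, which promises to "add the two contributions," does not actually deliver the bound \eqref{bound-TCL-lambda} and should either carry the variance-discrepancy term into the final estimate or justify why it can be dropped.
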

\begin{proof}
	Theorem (\ref{TCL-lambda}) is a direct consequence of (\ref{optimal berry esseen}), Lemma (\ref{cum-lambda}) and the fact that for $N,M$ large, we have
	$$ \frac{1}{M}\frac{\sum_{n=1}^{N}\delta_n^{4
			\alpha}}{\left(\sum_{n=1}^{N}\delta_n^{2 \alpha}\right)^2}\sim \frac{1}{MN} \frac{1}{d} \frac{(2 \alpha+d)^2}{4\alpha+d}.$$
\end{proof}
By the decomposition (\ref{eq3.theta-tilde}), we can write
\begin{equation*}
\sqrt{\psi^{\theta_0}_{T,N}} \left(\theta_0 - \tilde{\theta} _{T,N,M}\right) = \sqrt{\psi^{\theta_0}_{T,N}}  \frac{R_{N,M}}{S_{N,M}} + \frac{\Lambda_{N,
		M}/\sqrt{\psi^{\theta_0}_{T,N}}}{S_{N,M}/ \psi^{\theta_0}_{T,N}}.
\end{equation*}
Therefore, we can write using the fact that $d_W\left(X+Y, Z\right) \leq E[|X|] + d_W\left(Y,Z\right) $ for all $X,Y,Z$ random variables such that $X \in L^1(\Omega)$.
\begin{align}\label{dwtheta}
d_W\left( \sqrt{\psi^{\theta_0}_{T,N}} \left(\theta_0 - \tilde{\theta} _{T,N,M}\right), Z\right) & \leq \sqrt{\psi^{\theta_0}_{T,N}}  \mathbf{E}\left[\left|\frac{R_{N,M}}{S_{N,M}} \right| \right] + d_W\left(\frac{\Lambda_{N,
		M}/\sqrt{\psi^{\theta_0}_{T,N}}}{S_{N,M}/ \psi^{\theta_0}_{T,N}} ,
Z\right).
\end{align}
On the other hand, using triangular inequality for the distance $d_W$ we get
\begin{align*}
d_W\left(\frac{\Lambda_{N,
		M}/\sqrt{\psi^{\theta_0}_{T,N}}}{S_{N,M}/ \psi^{\theta_0}_{T,N}} , Z\right) &
\leq d_W\left(\frac{\Lambda_{N,
		M}}{\sqrt{\psi^{\theta_0}_{T,N}}}, Z\right) + d_W\left( \frac{\Lambda_{N,
		M}/\sqrt{\psi^{\theta_0}_{T,N}}}{S_{N,M}/ \psi^{\theta_0}_{T,N}}, \frac{\Lambda_{N,
		M}}{\sqrt{\psi^{\theta_0}_{T,N}}}\right) \\
& \leq d_W\left(\frac{\Lambda_{N,
		M}}{\sqrt{\psi^{\theta_0}_{T,N}}}, Z\right) + \mathbf{E}\left[\left|\frac{\Lambda_{N,
		M}/\sqrt{\psi^{\theta_0}_{T,N}}}{S_{N,M}/ \psi^{\theta_0}_{T,N}} - \frac{\Lambda_{N,
		M}}{\sqrt{\psi^{\theta_0}_{T,N}}}\right|\right] \\
& \leq d_W\left(\frac{\Lambda_{N,
		M}}{\sqrt{\psi^{\theta_0}_{T,N}}}, Z\right) + \|\frac{\Lambda_{N,
		M}/\sqrt{\psi^{\theta_0}_{T,N}}}{S_{N,M}/ \psi^{\theta_0}_{T,N}}\|_{L^2} \times \|1-  \frac{S_{N,M}}{ \psi^{\theta_0}_{T,N}}\|_{L^2} \\
& \leq d_W\left(\frac{\Lambda_{N,
		M}}{\sqrt{\psi^{\theta_0}_{T,N}}}, Z\right) + \|\frac{\Lambda_{N,
		M}}{\sqrt{\psi^{\theta_0}_{T,N}}}\|_{L^4} \times \| \frac{\psi^{\theta_0}_{T,N}}{S_{N,T}}\|_{L^4} \times   \|1-  \frac{S_{N,M}}{ \psi^{\theta_0}_{T,N}}\|_{L^2}.
\end{align*}
Notice that the sequence $\Lambda_{N,
	M}$ is a second chaos random variable, indeed by (\ref{Lambda-zeta}), we can write
\begin{align*}
\Lambda_{N,
	M} & = \sum\limits_{n=1}^{N} \delta^{2\alpha}_n\sum\limits_{i=1}^{M} I^{w_n}_1\left(e^{-\theta_0 \delta^{2\alpha}_n(t_{i-1}-.)} \mathbf{1}(
.)_{[0,t_{i-1}]} \right) I^{w_n}_1\left(e^{-\theta_0 \delta^{2\alpha}_n(t_{i}-.)} \mathbf{1}(
.)_{[t_{i-1},t_i]}\right) \\
& = \sum\limits_{n=1}^{N} I^{w_n}_2 \left[\delta^{2\alpha}_n \sum\limits_{i=1}^{M} \left(e^{-\theta_0 \delta^{2\alpha}_n(t_{i-1}-.)} \mathbf{1}(
.)_{[0,t_{i-1}]} \otimes e^{-\theta_0 \delta^{2\alpha}_n(t_{i}-.)} \mathbf{1}(
.)_{[t_{i-1},t_i]} \right)\right].
\end{align*}
Then, by the hypercontractivity property (\ref{hypercontractivity}), we get $\|\frac{\Lambda_{N,
		M}}{\sqrt{\psi^{\theta_0}_{T,N}}}\|_{L^4} \leq 3 \|\frac{\Lambda_{N,
		M}}{\sqrt{\psi^{\theta_0}_{T,N}}}\|_{L^2} < +\infty $.\\ On the other hand, by (\ref{disc-case}) in Lemma \ref{CW} , we have $ \| \frac{\psi^{\theta_0}_{T,N}}{S_{N,T}}\|_{L^4}  < +\infty.$ Finally, by Lemma \ref{norm-SNM}, the following estimates hold true
\begin{equation}\label{bound-norm-SNM}
\| \frac{S_{N,M}}{\psi^{\theta_0}_{T,N}} -1 \|_{L^2}    \leq C(\alpha, \theta_0, \bar{\sigma}, d) \times b_{N,T}%
\end{equation}
with $b_{N,T}$ is the same bound in (\ref{crochetENT}). A comparison, between the bound (\ref{bound-TCL-lambda}) obtained in Lemma \ref{TCL-lambda} and (\ref{crochetENT}), we can easily checn that under the assumption $\Delta_M N^{\frac{2\alpha}{d}} \rightarrow 0$ as $N,M,T \rightarrow +\infty$, we get
\begin{equation}\label{bound-norm-SNM}
\frac{1}{MN} = \left\{
\begin{array}{ll}
o\left(\frac{1}{\sqrt {T}} \frac{1}{N^{\frac{2 \alpha}{d}}}\right) & \mbox{ if } 2\alpha < d \\
&  \\
o\left(\frac{1}{\sqrt{T}} \frac{1}{N}\right) & \mbox{ if } 2 \alpha =d%
\\
~~ &  \\
o\left(\frac{1}{\sqrt{T}} \frac{1}{N^{\frac{\alpha}{d}+ \frac{1}{2}}}\right) & \mbox{ if }2 \alpha > d. \\
~~ &
\end{array}%
\right.
\end{equation}
Now, by (\ref{dwtheta}) it remains to control the convergence to zero of the term $\sqrt{\psi^{\theta_0}_{T,N}} \mathbf{E}\left[\left|\frac{R_{N,M}}{S_{N,M}}\right|\right]$ when $T,M,N \rightarrow + \infty$. Indeed, by Cauchy-Schwartz inequality, we have
\begin{equation*}
\sqrt{\psi^{\theta_0}_{T,N}} \mathbf{E}\left[\left|\frac{R_{N,M}}{S_{N,M}}\right|\right] \leq \mathbf{E}\left[\left(\frac{R_{N,M}}{\sqrt{\psi^{\theta_0}_{T,N}}}\right)^{2}\right]^{1/2} \times \mathbf{E}\left[\left(\frac{\psi^{\theta_0}_{T,N}}{S_{N,M}}\right)^{2}\right]^{1/2}.
\end{equation*}
By by (\ref{disc-case}) in Lemma \ref{CW} , we have $ \| \frac{\psi^{\theta_0}_{T,N}}{S_{N,T}}\|_{L^2}  < +\infty.$ On the other hand, from  (\ref{norm-RNM}) of Lemma \ref{RNM}, we have $\mathbf{E}[R^2_{N,M}] = A_{N,M,T} + B_{N,M,T}$. Moreover,
\begin{equation*}
\frac{A_{N,M,T}}{\psi^{\theta_0}_{T,N}} \leq \frac{\theta_0^4 M^2 \Delta^4_M}{ T \left(\sum\limits_{n=1}^{N}\delta^{2\alpha}_n \right)}
\left( \sum\limits_{n=1}^{N}\delta^{4\alpha}_n \right)^2  \leq c(\theta_0,\alpha, \bar{\sigma}, d) \frac{T^3}{M^2}
N^{\frac{6\alpha}{d}+1},
\end{equation*}
and
\begin{equation*}
\frac{B_{N,M,T}}{\psi^{\theta_0}_{T,N}} \leq \frac{2\theta_0^4 M^2 \Delta^4_M}{ T \left(\sum\limits_{n=1}^{N}\delta^{2\alpha}_n \right)}  \sum\limits_{n=1}^{N}\delta^{8\alpha}_n \leq
c(\theta_0,\alpha, \bar{\sigma}, d) \frac{T^3}{M^2} N^{\frac{6\alpha}{d}}.
\end{equation*}
We conclude that if
\begin{equation*}
\frac{T^{3/2} N^{\frac{3 \alpha}{d}+ \frac{1}{2}}}{M} \rightarrow 0 \text{ \ } \text{as} \text{ \ } {N,M,T \rightarrow +\infty.}
\end{equation*}
Then, there exists a constant $c(\theta_0,\alpha, \bar{\sigma}, d) > 0$ such that
\begin{equation}
\sqrt{\psi^{\theta_0}_{T,N}} \mathbf{E}\left[\left|\frac{R_{N,M}}{S_{N,M}}\right|\right] \leq c(\theta_0,\alpha, \bar{\sigma}, d) \frac{T^{3/2} N^{\frac{3 \alpha}{d}+ \frac{1}{2}}}{M} \rightarrow 0
\end{equation}
as $M,N,T \rightarrow + \infty.$
\begin{theorem}\label{thm-theta-tilde}
	Consider the estimator $\tilde{\theta}_{T,N,M}$ defined in
	(\ref{tildetheta}). Then, there exists a constant $C(\alpha,
	\theta_0,\bar{\sigma}, d)$ such that
	\begin{equation*}
	d_W\left( \sqrt{\psi^{\theta_0}_{T,N}} \left(\theta_0 - \tilde{\theta} _{T,N,M}\right), Z \right) \leq C(\alpha, \theta_0,\bar{\sigma}, d) \max\left( \frac{T^{3/2} N^{\frac{3 \alpha}{d}+ \frac{1}{2}}}{M}, b_{N,T} \right)
	\end{equation*}
	where $b_{N,T}$ is defined in (\ref{crochetENT}) and $Z \sim \mathcal{N}(0,1)$. In particular, if
	\begin{equation*}
	\frac{T^{3/2} N^{\frac{3 \alpha}{d}+ \frac{1}{2}}}{M} \rightarrow 0 \text{ \ } \text{as} \text{ \ } {N,M,T \rightarrow +\infty.}
	\end{equation*}
	Then,
	as $T,M,N \rightarrow +\infty$,
	\begin{equation*}
	\sqrt{T} N^{\frac{\alpha}{d}+\frac{1}{2}}\left(\theta_0 - \tilde{\theta}_{T,N,M}\right)\overset{law}{\longrightarrow }\mathcal{N}%
	\left(0, \frac{(4\alpha /d +2) \theta_0}{\bar{\sigma}^{\alpha}} \right).
	\end{equation*}
\end{theorem}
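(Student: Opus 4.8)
The plan is to assemble the estimates already prepared in the lead-up to the statement. Starting from the decomposition (\ref{eq3.theta-tilde}), $\theta_0 - \tilde{\theta}_{T,N,M} = \frac{R_{N,M}}{S_{N,M}} + \frac{\Lambda_{N,M}}{S_{N,M}}$, the first move is to invoke the bound (\ref{dwtheta}), which splits the target Wasserstein distance into a discretization-error remainder $\sqrt{\psi^{\theta_0}_{T,N}}\,\mathbf{E}\big[|R_{N,M}/S_{N,M}|\big]$ and a normalized ratio term $d_W\big(\frac{\Lambda_{N,M}/\sqrt{\psi^{\theta_0}_{T,N}}}{S_{N,M}/\psi^{\theta_0}_{T,N}}, Z\big)$. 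The proof then reduces to bounding these two pieces and taking their maximum.

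For the ratio term I would apply the triangle inequality in $d_W$ to peel off the normalizing denominator, controlling the resulting product by Theorem \ref{TCL-lambda} (which gives $d_W(\Lambda_{N,M}/\sqrt{\psi^{\theta_0}_{T,N}}, Z) \leq C/(MN)$), the hypercontractivity bound on $\|\Lambda_{N,M}/\sqrt{\psi^{\theta_0}_{T,N}}\|_{L^4}$, the uniform $L^4$-bound on $\psi^{\theta_0}_{T,N}/S_{N,M}$ from Lemma \ref{CW}, and the $L^2$-estimate $\|1 - S_{N,M}/\psi^{\theta_0}_{T,N}\|_{L^2} \leq C\, b_{N,T}$ of Lemma \ref{norm-SNM}. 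Since under $\Delta_M N^{2\alpha/d} \to 0$ one has $1/(MN) = o(b_{N,T})$ in each of the three regimes of (\ref{crochetENT}), this term is $O(b_{N,T})$. For the remainder I would quote the Cauchy--Schwarz estimate, combining Lemma \ref{RNM} (which controls $\mathbf{E}[(R_{N,M}/\sqrt{\psi^{\theta_0}_{T,N}})^2]$ through the pieces $A_{N,M,T}$ and $B_{N,M,T}$) with the uniform $L^2$-bound on $\psi^{\theta_0}_{T,N}/S_{N,M}$ from Lemma \ref{CW}, obtaining $\sqrt{\psi^{\theta_0}_{T,N}}\,\mathbf{E}[|R_{N,M}/S_{N,M}|] \leq c\, \frac{T^{3/2} N^{\frac{3\alpha}{d}+\frac{1}{2}}}{M}$. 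Collecting both bounds gives the stated maximum.

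The limit statement follows by a Slutsky-type argument: under $\frac{T^{3/2} N^{3\alpha/d + 1/2}}{M} \to 0$ both contributions vanish, so $\sqrt{\psi^{\theta_0}_{T,N}}(\theta_0 - \tilde{\theta}_{T,N,M}) \to Z$ in $d_W$, hence in distribution, and I would then replace $\sqrt{\psi^{\theta_0}_{T,N}}$ by the explicit normalization via the asymptotic $\psi^{\theta_0}_{T,N} \sim \frac{\bar{\sigma}^{\alpha} d\, T N^{2\alpha/d+1}}{(4\alpha+2d)\theta_0}$ from (\ref{ENT}): writing $\sqrt{T}N^{\alpha/d+1/2} = \sqrt{\frac{(4\alpha+2d)\theta_0}{\bar{\sigma}^{\alpha} d}}\,\sqrt{\psi^{\theta_0}_{T,N}}\,(1+o(1))$ rescales the limit to variance $\frac{(4\alpha/d+2)\theta_0}{\bar{\sigma}^{\alpha}}$. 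The main obstacle is the discretization-bias term $R_{N,M}/S_{N,M}$: unlike $\Lambda_{N,M}$, it carries no self-normalizing CLT and its second moment must be expanded via Isserlis' theorem with the bias coefficient $s_n = O(\delta_n^{6\alpha}\Delta_M^2)$, which is exactly what produces the dominant rate $T^{3/2}N^{3\alpha/d+1/2}/M$ and forces the joint scaling condition—so the delicate point is verifying that this bias is correctly balanced against $b_{N,T}$ when forming the maximum.
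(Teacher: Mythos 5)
Your proposal is correct and follows essentially the same route as the paper: the decomposition (\ref{eq3.theta-tilde}), the splitting (\ref{dwtheta}) via $d_W(X+Y,Z)\leq \mathbf{E}[|X|]+d_W(Y,Z)$, the triangle-inequality/hypercontractivity treatment of the ratio term using Theorem \ref{TCL-lambda}, Lemma \ref{norm-SNM} and Lemma \ref{CW}, the Cauchy--Schwarz bound on the bias term via Lemma \ref{RNM}, and the final rescaling through the asymptotics of $\psi^{\theta_0}_{T,N}$. The only point worth making explicit (which the paper also leaves implicit) is that the hypothesis $T^{3/2}N^{3\alpha/d+1/2}/M\to 0$ implies $\Delta_M N^{2\alpha/d}\to 0$, so the lemmas you invoke do apply.
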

\section{Appendix}

\begin{lemma}\label{CW}
	Let $N \geq 1$ and consider $\psi^{\theta_0}_{T,N}$ defined in (\ref{ENT}), then for every $p, T_0 > 0$, we have
	\begin{eqnarray}\label{cont-case}
	\sup_{T\geq T_0}
	\mathbf{E}\left[\left(\frac{1}{\psi^{\theta_0}_{T,N}}\sum_{i=1}^{N}
	\delta_i^{4\alpha}\int_0^T v_i^2(s)
	ds\right)^{-p}\right]<\infty,\label{average-cont}
	\end{eqnarray}
	Moreover, for every $p>0$, there exists $M_0 \geq 1$ such that
	\begin{eqnarray}\label{disc-case}
	\sup_{M\geq M_0}
	\mathbf{E}\left[\left(\frac{1}{T \sum\limits_{j=1}^{N} \delta^{2 \alpha}_j} \Delta_M \sum_{i=1}^{N}
	\delta_i^{4\alpha} \sum\limits_{k=0}^{M-1} v^{2}_i(t_k)
	\right)^{-p}\right] < +\infty,\label{average-discret}
	\end{eqnarray}
\end{lemma}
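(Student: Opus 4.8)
The plan is to establish both \eqref{cont-case} and \eqref{disc-case} by reducing each to a uniform bound on the negative moments of a single scalar Ornstein--Uhlenbeck quadratic functional. Throughout recall that $v_i(t)=\int_0^t e^{-\theta_0\delta_i^{2\alpha}(t-s)}\,dw_i(s)$ solves the scalar OU dynamics with rate $b_i:=\theta_0\delta_i^{2\alpha}$, that the $v_i$ are independent across $i$, and that every summand occurring is nonnegative. Using $\psi^{\theta_0}_{T,N}=\tfrac{T}{2\theta_0}\sum_{n=1}^{N}\delta_n^{2\alpha}$ and keeping only the index $i=1$, positivity gives
\[
\frac{1}{\psi^{\theta_0}_{T,N}}\sum_{i=1}^N\delta_i^{4\alpha}\int_0^T v_i^2(s)\,ds\ \geq\ c\,\frac{1}{T}\int_0^T v_1^2(s)\,ds,\qquad c:=\frac{2\theta_0\delta_1^{4\alpha}}{\sum_{n=1}^N\delta_n^{2\alpha}}>0,
\]
and, since $\tfrac1T\Delta_M\sum_{k=0}^{M-1}v_1^2(t_k)=\tfrac1M\sum_{k=0}^{M-1}v_1^2(t_k)$, an analogous lower bound for the bracket in \eqref{disc-case}. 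Hence $\mathbf{E}[(\cdots)^{-p}]\leq c^{-p}\,\mathbf{E}[Y^{-p}]$ with $Y=\tfrac1T\int_0^T v_1^2\,ds$ (resp. $Y=\tfrac1M\sum_{k=0}^{M-1}v_1^2(t_k)$), so it suffices to bound the negative moments of one OU quadratic functional.

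For this I would use the elementary identity, valid for any positive $Y$ and $p>0$,
\[
\mathbf{E}[Y^{-p}]=\frac{1}{\Gamma(p)}\int_0^\infty z^{p-1}\,\mathbf{E}\bigl[e^{-zY}\bigr]\,dz,
\]
which turns the problem into finding a uniform upper bound on the Laplace transform $\mathbf{E}[e^{-zY}]$ that is integrable against $z^{p-1}$ with a constant independent of the growing parameter.

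In the continuous case the Laplace functional of the OU energy is classical (Girsanov/Cameron--Martin, i.e. solving the associated Riccati equation): with $b=b_1$ and $\gamma=\gamma_T(z):=\sqrt{b^2+2z/T}$,
\[
\mathbf{E}\Bigl[e^{-z\frac1T\int_0^T v_1^2\,ds}\Bigr]=e^{bT/2}\Bigl(\cosh(\gamma T)+\tfrac{b}{\gamma}\sinh(\gamma T)\Bigr)^{-1/2}.
\]
Since $\cosh(\gamma T)\geq\tfrac12 e^{\gamma T}$ and $b/\gamma\leq1$, this is at most $\sqrt2\,e^{-(\gamma-b)T/2}$, and because $(\gamma-b)T/2=z/(\gamma+b)$ one checks, splitting at $z=b^2T$, that $(\gamma-b)T/2\geq\min\{z/(3b),\,\sqrt{zT_0}/3\}$ for all $T\geq T_0$. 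Therefore $\mathbf{E}[e^{-zY}]\leq\sqrt2\,(e^{-z/(3b)}+e^{-\sqrt{zT_0}/3})$ uniformly in $T\geq T_0$, which is integrable against $z^{p-1}$ with a $T$-independent bound; this yields \eqref{cont-case}.

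For the discrete case the vector $(v_1(t_1),\dots,v_1(t_{M-1}))$ is centered Gaussian with covariance $\Sigma_M$, so $\mathbf{E}[e^{-zY}]=\det\!\bigl(I+\tfrac{2z}{M}\Sigma_M\bigr)^{-1/2}$. The decisive structural fact is that $(v_1(t_k))_k$ is a discrete OU (AR$(1)$) chain, $v_1(t_k)=e^{-b\Delta_M}v_1(t_{k-1})+\xi_k$ with i.i.d. Gaussian innovations, so $\Sigma_M^{-1}$ is tridiagonal and the determinant obeys a two-term transfer-matrix (discrete Riccati) recursion whose solution is the exact discrete analogue of the $\cosh/\sinh$ expression above. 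The \emph{main obstacle} is to show that this discrete determinant is bounded below by a uniform multiple of its continuous counterpart for $M\geq M_0$ --- equivalently, to control the eigenvalues of $\tfrac1M\Sigma_M$ uniformly in $M$. I expect this to follow because, as $M\to\infty$ with $T,N$ fixed, the recursion converges to the continuous Riccati solution while $\tfrac1M\sum_{k=0}^{M-1}v_1^2(t_k)\to\tfrac1T\int_0^T v_1^2\,ds$; combining the explicit recursion with this convergence should give a uniform-in-$M$ bound of the form $\mathbf{E}[e^{-zY}]\leq C(e^{-cz}+e^{-c\sqrt z})$ for $M\geq M_0$, and integrating against $z^{p-1}$ yields \eqref{disc-case}. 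The positivity reduction and the Laplace identity are routine; the uniformity of the determinant lower bound is the genuinely delicate step.
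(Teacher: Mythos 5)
Your continuous-case argument is correct and complete, but it takes a genuinely different route from the paper's. The paper handles (\ref{cont-case}) by importing the scheme of Lemma 1 in \cite{EAA}: split $[0,T]$ into blocks, lower-bound the conditional expectation of each block's contribution given the past, and run an iterated-conditioning/exponential-Markov argument to obtain small-ball estimates $\mathbf{P}(\cdot<\varepsilon)\leq C\varepsilon^{q}$ for every $q$, hence all negative moments. You instead reduce to the single mode $i=1$ (legitimate here, since $N$ is fixed and every summand is nonnegative), write $\mathbf{E}[Y^{-p}]=\Gamma(p)^{-1}\int_0^\infty z^{p-1}\mathbf{E}[e^{-zY}]\,dz$, and control the Laplace transform through the exact Cameron--Martin formula; your split at $z=b^2T$ correctly yields the dominating function $\sqrt2\,(e^{-z/(3b)}+e^{-\sqrt{zT_0}/3})$, uniform in $T\geq T_0$ and integrable against $z^{p-1}$. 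Where the exact Laplace functional is available, your route is more explicit and arguably cleaner than the conditioning argument.

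The discrete half, however, contains a genuine gap, precisely at the step you flag. Pointwise (in $z$) convergence of $\det\bigl(I+\tfrac{2z}{M}\Sigma_M\bigr)^{-1/2}$ to the continuous expression, or of the Riemann sum to the integral, does not deliver what the integral representation requires: you need a single bound of the form $C(e^{-cz}+e^{-c\sqrt z})$ valid for \emph{all} $z>0$ and \emph{all} $M\geq M_0$ simultaneously, and ``the recursion converges to the continuous Riccati solution'' gives no control in the regime where $z$ is large relative to $M$. That quantitative two-parameter estimate is the entire content of (\ref{disc-case}) and is not supplied. Note that the paper sidesteps the determinant altogether: it rewrites $\tfrac1M\sum_k v_i^2(t_k)$ as $\tfrac1T\int_0^T Y_{i,t}^2\,dt$ for the step process $Y_{i,t}$ and reuses the continuous-case conditioning machinery verbatim; the only new ingredient is the one-step conditional lower bound
\begin{equation*}
\mathbf{E}\bigl[v_i^2(t_k)\mid\mathcal{F}^{w}_{t_{k-1}}\bigr]\ \geq\ \int_0^{\Delta_M}e^{-2\theta_0\delta_i^{2\alpha}u}\,du\ \geq\ \frac{1}{4\theta_0\delta_i^{2\alpha}}\qquad\text{for }M\geq M_0,
\end{equation*}
which follows from the independence of $v_i(t_{k-1})$ and the innovation over $]t_{k-1},t_k]$. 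If you wish to keep your Laplace-transform framework, the corresponding repair is to bound $\mathbf{E}[e^{-zY_M}]$ by iterated conditioning over the time steps, using that each innovation is an independent Gaussian of variance at least $\tfrac{1}{4\theta_0\delta_1^{2\alpha}}$, rather than through the exact determinant.
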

\begin{proof}
The proof of assertion (\ref{cont-case}) relies on similar arguments as in the proof of Lemma 1 in \cite{EAA}. For the discrete case (\ref{disc-case}) Let $T>0$. Recall that for $k=1,...,N$, the processes $v_k(t):= \int_{0}^{t} e^{-\theta_0 \delta^{2\alpha}_k(t-u)} dw_k(u)$, $t \in [0,T]$, where $w_1,...,w_n$ are $N$ independent Brownian motions. Let $w=(w_1,..,w_N)$ and define ${\mathcal{F}_t}^w= \sigma(\bigcup_{i=1}^{N} {\mathcal{F}_t}^{w_i})$ where
$\mathcal{F}_t^{w_i}=\sigma\{w_i(u),u\leq t\}$  denotes the sigma-field generated by $w_i$ for $i=1,...,N$. \\
Fix $N \geq 1$ and $p>0$ and let $M_0 \geq 1$ an integer such that for every $M \geq M_0$ and $ M > 2p$, we have
\begin{equation}\label{estim-norm-OU}
\int_{0}^{\frac{T}{M}} e^{-2 \theta_0 \delta^{2 \alpha}_i u} du \geq \frac{1}{4 \theta_0 \delta^{2 \alpha}_i}, \text{ \ } i=1,...,N.
\end{equation}
Then, consider the following processes defined for every $i=1,...,N$ as
\begin{equation*}
Y_{i,t} := \sum\limits_{k=0}^{M-1} v_i(t_k)
\textbf{1}_{]t_{k-1},t_k]}(t),
\end{equation*}
where for $k=0,...,M$ $t_k = k \Delta_M$, $\Delta_M = \frac{T}{M} \rightarrow 0$ and $ M\Delta_M \rightarrow +\infty$ as $M \rightarrow +\infty$.
We can easily check that
\begin{equation*}
\frac{1}{T} \int_{0}^{T} Y^2_{i,t} dt = \frac{1}{M}
\sum\limits_{k=0}^{M-1} v^{2}_i(t_k).
\end{equation*}
We can follow the same steps of the continuous case with $Y_{i,t}$ instead of $v_i(t)$. The only remaining part to be proved is a lower bound for all $M \geq M_0$ of the term for any $k=1,...,N$
\begin{equation}
A_{M,N,k} : = \mathbf{E}\left[ \frac{\sum\limits_{i= 1}^{N}
	\delta^{4 \alpha}_i}{T \sum\limits_{j= 1}^{N} \delta^{4 \alpha}_j}
\int_{(k-1)T/M}^{kT/M} Y^2_{i,t} dt \mid\mathcal{F}^w_{(k-1)T/M}
\right].
\end{equation}
Or for $k=1,...,N$, we have for any $M \geq M_0$,
\begin{align*}
A_{M,N,k}& = \frac{1}{M} \frac{\sum\limits_{i=1}^{N} \delta^{4 \alpha}_i}{\sum \limits_{j=1}^{N} \delta^{2 \alpha}_j} \int_{(k-1)T/M}^{kT/M} \mathbf{E}[v^2_i(t_k) \mid\mathcal{F}^{w_i}_{(k-1)T/M}] \geq \frac{1}{M} \frac{\sum\limits_{i=1}^{N} \delta^{4 \alpha}_i}{\sum \limits_{j=1}^{N} \delta^{2 \alpha}_j} \left[  \int_{(k-1)T/M}^{kT/M} e^{-2 \theta_0 \delta^{2 \alpha}_i( \frac{kT}{M} -u)} du\right]\\
&  \geq \frac{1}{M} \frac{\sum\limits_{i=1}^{N} \delta^{4 \alpha}_i}{\sum \limits_{j=1}^{N} \delta^{2 \alpha}_j} \int_{0}^{T/M} e^{-2 \theta_0 \delta^{2 \alpha}_i u} du \geq \frac{1}{M} \frac{\sum\limits_{i=1}^{N} \delta^{4 \alpha}_i}{\sum \limits_{j=1}^{N} \delta^{2 \alpha}_j} \frac{1}{4 \theta_0 \delta^{2\alpha}_i} = \frac{1}{4 \theta_0 M}.
\end{align*}
which yields the proof of (\ref{average-discret}).
\end{proof}
\section{Acknowlegment}
This research project was funded by Princess Nourah bint
Abdulrahman University Researchers Supporting Project number
(PNURSP2024R358), Princess Nourah bint Abdulrahman University,
Riyadh, Saudi Arabia.

\end{document}